\newtheorem{thm}{Theorem}
\begin{document}

\markboth{J.C. Saunders}
{Sums of Digits in $q$-ary expansions}

%
\catchline{}{}{}{}{}
%

\title{SUMS OF DIGITS IN $q$-ARY EXPANSIONS}

\author{J.C. SAUNDERS}

\address{Department of Pure Mathematics, University of Waterloo, 200 University Avenue West\\
Waterloo, Ontario N2L 3G1, Canada\\
\email{saunders\_jc@hotmail.com} }

\maketitle

\begin{history}
\received{(26 August 2013)}
\accepted{(23 June 2014)}
\end{history}

\begin{abstract}
Let $s_q(n)$ denote the sum of the digits of a number $n$ expressed in base $q$. We study here the ratio $\frac{s_q(n^\alpha)}{s_q(n)}$ for various values of $q$ and $\alpha$. In 1978, Kenneth B. Stolarsky showed that $\lim\inf_{n\rightarrow\infty}\frac{s_2(n^2)}{s_2(n)}=0$ and that $\lim\sup_{n\rightarrow\infty}\frac{s_2(n^2)}{s_2(n)}=\infty$ using an explicit construction. We show that for $\alpha=2$ and $q\geq 2$, the above ratio can in fact be any positive rational number. We also study what happens when $\alpha$ is a rational number that is not an integer, terminating the resulting expression by using the floor function.
\end{abstract}

\keywords{Sequences and sets; digital expansions; sum of digits function.}

\ccode{Mathematics Subject Classification 2010: 11B99, 11Y55}

\section{Introduction}
We let $s_q(n)$ denote the sum of the digits of a number $n$ expressed in base $q$. For example, $s_2(n)$ denotes the number of $1$s in the binary expansion of $n$. For every $u\in\mathbb{N}$, we let $(u)_q$ denote the the expression of $u$ in base $q$ as follows. For $(u)_q$, we will use $q_1^{(a_1)}q_2^{(a_2)}...q_k^{(a_k)}=\underbrace{q_1q_1...q_1}_{a_1}\underbrace{q_2q_2...q_2}_{a_2}...\underbrace{q_kq_k...q_k}_{a_k}$ where we first have $a_1$ $q_1$s followed by $a_2$ $q_2$s, and so on up until $a_k$ $q_k$s. For example, in base $3$, $(u)_3=1^{(2)}2^{(1)}0^{(4)}2^{(2)}=112000022$ and represents the number $u=3^8+3^7+2\cdot 3^6+2\cdot 3+2\cdot 1=6561+2187+2\cdot 729+6+2=10214$. Further $s_3(u)=1+1+2+2+2=8$.
\newline
\newline
The study of the distribution of $s_q(n)$ has had a number of significant results. One of these results comes from Stolarsky who proved an asymptotic relationship for $s_2(n^2)/s_2(n)$ \cite{stolarsky}. The result gives the bounds
\begin{equation}\label{1}
\{\lfloor\log n\rfloor\}^{-1}\leq\frac{s_2(n^2)}{s_2(n)}\leq\frac{(4\log\log n)^2}{\log n}.
\end{equation}
where the left inequality always holds, and the right inequality holds for an infinite number of $n$ and
\begin{equation}\label{11}
c(h)(\log n)^{1-1/h}<\frac{s_2(n^h)}{s_2(n)}\leq 2(h\log n)^{1-1/h}
\end{equation}
where the right inequality always holds, and the left inequality holds for an infinite number of $n$ where $c(h)$ depends only on $h$. In particular this shows that the ratio $\frac{s_2(n^2)}{s_2(n)}$ can get both arbitrarily large and arbitrarily small. Lindstrom \cite{lindstrom} proved that for any polynomial $p(x)$ with integer coefficients, we have that
\begin{equation*}
\lim\sup_{n\rightarrow\infty}\frac{s_2(p(n))}{\log_2(n)}=h
\end{equation*}
where $\deg p=h$. Melfi \cite{melfi} studied the sequence of natural numbers $n$ for which the sum of the binary digits of $n$ is equal to the sum of the binary digits of $n^2$. He proved that
\begin{equation*}
\#\{n\leq N|s_2(n)=s_2(n^2)\}\gg N^{0.025}.
\end{equation*}
This lower bound was improved to $N^{1/19}$ by Hare, Laishram, and Stoll \cite{hare2}. Other work was done by Drmota and Rivat \cite{drmota} in studying the binary digits of squares. They showed that for any $k$ if one takes the set of squares $n^2$ such that $n<2^k$, we have that the two sets of numbers $\#\{n<2^k: s_{\geq k}(n^2)=m\}$ and $\#\{n<2^k: s_{<k}(n^2)=m\}$ are asymptotically equidistributed in residue classes where $s_{|<k|}(n^2)=s_2(n^2\mod 2^k)$ represents the sum of the rightmost $k$ digits in $n^2$ and $s_{|\geq k|}(n^2)=s_2(\lfloor n^2/2^k\rfloor)$ represents the sum of the remaining digits. The study of the sum of the binary digits of a number has also been extended to include approximations of irrational numbers, for example, by Rivoal, see \cite{rivoal}. Recently Hare, Laishram, and Stoll \cite{hare} managed to generalise Stolarsky's result by proving that for any polynomial $p(x)$ with integer coefficients and $\deg p\geq 2$, we have for any $q\geq 2$
\begin{equation*}
\lim\inf_{n\rightarrow\infty}\frac{s_q(p(n))}{s_q(n)}=0
\end{equation*}
and
\begin{equation*}
\lim\sup_{n\rightarrow\infty}\frac{s_q(p(n))}{s_q(n)}=\infty.
\end{equation*}
Stolarsky's bounds \eqref{1} and \eqref{11} show that the ratio $\frac{s_2(n^2)}{s_2(n)}$ can get arbitarily large and small.
We show in Theorem \ref{thm4} that this ratio $\frac{s_2(n^2)}{s_2(n)}$ will achieve every positive rational number. This is shown in Section \ref{sec2}. We break up the problem into two cases for the range of the ratio $r$. We start with the case of $0<r<1$, which involves a complicated construction of the desired number $u$ and then go on to the case of $r\geq 1$. We then devote Section \ref{sec3} to prove an analogous result for the general base $q$ case where we see that the ratio also hits every positive rational number. 
\newline
\newline
Section \ref{sec4} is devoted to the study of rational exponents that are not integers. We show that for every positive rational value of $\alpha<\frac{1}{2}$ and every integer $q\geq 2$ that
\begin{equation*}
\frac{s_q(\lfloor n^\alpha\rfloor)}{s_q(n)}
\end{equation*}
can hit every positive rational number. We also give a couple of results that generalise Stolarsky's result of the infimum and supremum of this ratio with irrational exponents. In the last section, we discuss some open problems and future directions for this research.

\section{The Case of $n^2$ in Base $2$}\label{sec2}
To study the $n^2$ case we first give a construction that proves that the ratio $\frac{s_2(n^2)}{s_2(n)}$ can hit every rational number $0<r<1$. After this, we show how to extend this construction to get all $r>0$.

\subsection{The case $0<r<1$}
The following theorem gives the value of $s_2(u^2)$ for the particular construction of $u$ that we will use.

\begin{thm}\label{thm1}
Let $(u)_2=1^{(k)}01^{(k+1)}01^{(k+2)}...01^{(k+m)}01^{(n)}$. If $k>2m(m+1)-1$ and $n>(m+1)k+d$ for some explicitly computable constant $d>0$ depending only on $m$, then $s_2(u^2)=n-mk+e_1$ and $s_2(u)=n+k(m+1)+e_2$ for some explicitly computable constants $e_1$ and $e_2$ depending only on $m$.
\end{thm}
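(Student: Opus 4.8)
The plan is to exploit the block structure of $u$ by separating the top ``staircase'' from the long bottom run. Writing $S$ for the standalone integer whose binary expansion is the staircase $1^{(k)}01^{(k+1)}0\cdots 01^{(k+m)}$, we have
\[
u = S\cdot 2^{n+1} + (2^n-1),
\]
since the bottom block contributes $2^n-1$, position $n$ is the separating zero, and the staircase sits above it. The digit sum $s_2(u)$ is then immediate: it is the total number of ones, namely $\sum_{i=0}^m(k+i)+n = n + k(m+1) + \frac{m(m+1)}{2}$, so that $e_2 = \frac{m(m+1)}{2}$. First I would expand the square,
\[
u^2 = 2^{2n+2}\big(S^2+S\big) + 2^{2n} - S\,2^{n+2} - 2^{n+1} + 1,
\]
which follows directly from the factorization above.

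The size hypothesis $n>(m+1)k+d$, with $d$ a little larger than $\frac{m(m+1)}{2}+m$, is exactly what guarantees $S\,2^{n+2}+2^{n+1}-1 < 2^{2n}$, so that $u^2$ splits into a high part $2^{2n+2}(S^2+S)$ occupying positions $\ge 2n+2$ and a low part $2^{2n}-(S\,2^{n+2}+2^{n+1}-1)$ occupying positions $\le 2n-1$. Since these ranges are disjoint, $s_2$ is additive across them, giving $s_2(u^2) = s_2(S^2+S) + s_2(2^{2n}-S\,2^{n+2}-2^{n+1}+1)$. I would then evaluate the low part, where all the $n$-dependence lives: factoring out $2^{n+1}$ rewrites it as $2^{n+1}\big((2^{n-1}-1)-2S\big)+1$, and since $2S<2^{n-1}$ the bracket is the bitwise complement of $2S$ inside an $(n-1)$-bit window, so its digit sum is $(n-1)-s_2(S)$ and the isolated $+1$ adds one more. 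With $s_2(S)=k(m+1)+\frac{m(m+1)}{2}$ this yields a low-part contribution of exactly $n - k(m+1) - \frac{m(m+1)}{2}$.

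Substituting back, the whole theorem reduces to the single identity
\[
s_2\big(S^2+S\big) = k + \frac{m(m+1)}{2} + e_1,
\]
that is, the digit sum of the squared-plus-itself staircase is $k$ up to a constant depending only on $m$. This last identity is where the real work and the hypothesis $k>2m(m+1)-1$ enter, and I expect it to be the main obstacle. The strategy is to write $S=\sum_{j=0}^m R_j$ with each block a run $R_j=2^{h_j}-2^{l_j}$, the exponents $l_j,h_j$ being explicit quadratics in $j$, and to expand $S^2+S$ into a signed sum of powers of two coming from the $O(m^2)$ products $R_jR_{j'}$ together with the linear term. The gaps between consecutive block positions are of order $k$, while the spread produced by the quadratic exponents is of order $m(m+1)$; the bound $k>2m(m+1)-1$ is precisely the separation that keeps the diagonal contributions $R_j^2$ from colliding and confines every cancellation and carry to a predictable neighbourhood.

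Resolving this signed sum into an honest binary expansion and counting the surviving ones should produce the $k$ from a single dominant run $2^{2h_0}-2^{h_0+l_0+1}+2^{2l_0}$-type term, plus a bounded, $k$-independent number of extra ones that one collects into $e_1$. The small cases $m=0$ and $m=1$ can be computed in closed form (for instance $S^2+S=2^{2k}-2^k$ when $m=0$ and $S^2+S=2^{4k+4}-2^{3k+4}+2^{k+1}$ when $m=1$, both giving $e_1=0$) and suggest the bookkeeping; for the general count I would most likely argue by induction on $m$, peeling off the topmost block of the staircase and showing that each additional block changes the digit sum of $S^2+S$ by a fixed amount independent of $k$, which is what makes $e_1$ depend only on $m$.
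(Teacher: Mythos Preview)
Your outline is sound and the algebra checks: $u=S\cdot 2^{n+1}+(2^n-1)$ gives precisely the expansion you wrote, the high/low split at bit $2n$ is clean once $n$ exceeds the bit-length of $S$ by a couple, and the low-part digit count $n-s_2(S)=n-(m+1)k-\tfrac{m(m+1)}{2}$ is correct. The reduction to $s_2(S^2+S)=k+c(m)$ is valid, and your closed forms for $m=0,1$ are right.

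Your decomposition is genuinely different from the paper's. The paper never isolates the tail: it writes $u=2^{c_1}-1-\sum_{i=2}^{m+2}2^{c_i}$ as a \emph{single} signed sum of powers (each $c_i$ marking a zero position), squares this expression wholesale, and then spends the bulk of the proof sorting the resulting double sum $\sum_{i,j}2^{c_i+c_j}$ into non-overlapping blocks. The non-collision argument --- showing that $c_{i-j}+c_j=c_{i'-j'}+c_{j'}$ forces $i=i'$ when $k+1>2m(m+1)$, and ordering the remaining exponents --- is exactly the work you are deferring to your ``main obstacle''. What your route buys is a cleaner separation of the $n$-dependence up front (the paper only discovers at the end which two terms carry $n$ and $k$); what the paper's route buys is that the hard combinatorics is done once for $u^2$ rather than restarted for $S^2+S$.

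The one place your sketch is thin is the endgame for $s_2(S^2+S)$. Induction on $m$ by peeling off the top block is awkward because the remaining staircase has block sizes $k+1,\dots,k+m$ rather than $k,\dots,k+m-1$, so the inductive object changes shape. A direct expansion of $S(S+1)$ in the paper's style --- writing $S=2^{L}-1-\sum 2^{d_i}$ for $S$'s own zero positions $d_i$ and running the same collision analysis --- would go through essentially verbatim and is probably the cleanest way to close the argument.
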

\begin{proof}
If all the $0$s in the above binary expansion of $u$ are replaced by $1$s, then we get the expansion for $2^{c_1}-1$ where $c_1=k(m+1)+(1+2+...+m)+(m+1)+n$. The leftmost zero subtracts off $2^{c_2}$ where $c_2=km+n+(1+2+...+m)+m$. Going through the $0$s from left to right, we find that the $j$th zero subtracts off $2^{c_{j+1}}$ where $c_{j+1}=k(m-j+1)+(j+(j+1)+...+m)+(m+1-j)+n$ and $c_{m+2}=n$. Thus we have
\begin{align*}
u&=2^{c_1}-2^{c_2}-2^{c_3}-...-2^{c_{m+2}}-1\\
&=2^{c_1}-1-\sum_{i=2}^{m+2}2^{c_i}.
\end{align*}
We note that
\begin{align}
c_i-c_{i+1}=&k(m+2-i)+n+((i-1)+i+(i+1)+...+m)+(m+2-i)\nonumber\\
&-k(m+1-i)-n-(i+(i+1)+...+m)-(m+1-j)\\
=&k+i\label{3}.
\end{align}
for all $1\leq i\leq m+1$. Thus we have the following:
\begin{align*}
u^2&=\left(2^{c_1}-1-\sum_{i=2}^{m+2}2^{c_i}\right)^2\\
&=(2^{c_1}-1)^2-2(2^{c_1}-1)\left(\sum_{i=2}^{m+2}2^{c_i}\right)+\left(\sum_{i=2}^{m+2}2^{c_i}\right)^2\\
&=2^{2c_1}-2^{c_1+1}+1-\sum_{i=2}^{m+2}2^{c_i+c_1+1}+\sum_{i=2}^{m+2}2^{c_i+1}+\sum_{i=2}^{m+2}\sum_{j=2}^{m+2}2^{c_i+c_j}\\
&=2^{2c_1}-2^{c_1+1}+1-\sum_{i=2}^{m+2}2^{c_i+c_1+1}+\sum_{i=2}^{m+2}2^{c_i+1}+\sum_{i=4}^{2m+4}\sum_{j=\max\{i-(m+2),2\}}^{\min\{i-2,m+2\}}2^{c_{i-j}+c_j}.
\end{align*}
We split the last summation up so that the conditions on the indices disappear:
\begin{align*}
&2^{2c_1}-2^{c_1+1}+1-\sum_{i=3}^{m+3}2^{c_{i-1}+c_1+1}+\sum_{i=2}^{m+2}2^{c_i+1}\\
&+\sum_{i=4}^{m+3}\sum_{j=2}^{i-2}2^{c_{i-j}+c_j}+\sum_{i=m+4}^{2m+4}\sum_{j=i-(m+2)}^{m+2}2^{c_{i-j}+c_j}\\
=&2^{2c_1}-2^{c_1+1}+1-2^{c_2+c_1+1}-\sum_{i=4}^{m+3}2^{c_{i-1}+c_1+1}+\sum_{i=2}^{m+2}2^{c_i+1}\\
&+\sum_{i=4}^{m+3}\sum_{j=2}^{i-2}2^{c_{i-j}+c_j}+\sum_{i=m+4}^{2m+4}\sum_{j=i-(m+2)}^{m+2}2^{c_{i-j}+c_j}.
\end{align*}
We combine the first summation with the $j=2$ term in the third summation and pull out the $i=2m+4$ term in the last summation, which when simplified gives $2^{2n}$ since $2c_{m+2}=2n$. We also combine the first and fourth terms since $c_1-c_2-1=k$. This simplifies to:
\begin{align*}
&2^{c_2+c_1+1}(2^{k}-1)-2^{c_1+1}+1+\sum_{i=4}^{m+3}2^{c_{i-1}+c_1+1}(2^{c_{i-2}+c_2-c_{i-1}-c_1-1}-1)\\
&+\sum_{i=2}^{m+2}2^{c_i+1}+\sum_{i=4}^{m+3}\sum_{j=3}^{i-2}2^{c_{i-j}+c_j}+\sum_{i=m+4}^{2m+3}\sum_{j=i-(m+2)}^{m+2}2^{c_{i-j}+c_j}+2^{2n}.
\end{align*}
We combine the first and third summations:
\begin{align*}
&2^{c_2+c_1+1}(2^{k}-1)+2^{c_1+1}(2^{2n-c_1-1}-1)+1\\
&+\sum_{i=4}^{m+3}\left(\left(\sum_{j=3}^{i-2}2^{c_{i-j}+c_j}\right)+2^{c_{i-1}+c_1+1}(2^{c_{i-2}+c_2-c_{i-1}-c_1-1}-1)\right)\\
&+\sum_{i=2}^{m+2}2^{c_i+1}+\sum_{i=m+4}^{2m+3}\sum_{j=i-(m+2)}^{m+2}2^{c_{i-j}+c_j}\\
=&2^{c_2+c_1+1}(2^{k}-1)\\
&+\sum_{i=4}^{m+3}\left(\left(\sum_{j=3}^{i-2}2^{c_{i-j}+c_j}\right)+2^{c_{i-1}+c_1+1}(2^{c_{i-2}+c_2-c_{i-1}-c_1-1}-1)\right)\\
&+\sum_{i=m+4}^{2m+3}\sum_{j=i-(m+2)}^{m+2}2^{c_{i-j}+c_j}+2^{c_1+1}(2^{n-k(m+1)-(1+2+...+m)-(m+1)-1}-1)+\sum_{i=2}^{m+2}2^{c_i+1}+1
\end{align*}
using the fact that $2n-c_1-1=n-k(m+1)-(1+2+...+m)-(m+1)-1$. We now make a series of observations on the above expression so that we may calculate the sum of its digits. First off, for every $1\leq j<i\leq m+2$, we have the following:
\begin{align*}
c_{i-j}+c_j=&(k+1)(m-(i-j)+2)+((i-j-1)+(i-j)+...+m)\\
&+(k+1)(m-j+2)+((j-1)+j+...+m)\\
=&(k+1)(2m-i+4)+((i-j-1)+(i-j)+...+m)\\
&+((j-1)+j+...+m)+2n.
\end{align*}
Thus if there exists $i,i',j,j'$, not necessarily distinct, in the ranges above with $i-j\geq 2$, $i'-j'\geq 2$, and $c_{i-j}+c_j=c_{i'-j'}+c_{j'}$, we have
\begin{align*}
&(k+1)(-i)+((i-j-1)+(i-j)+...+m)+((j-1)+j+...+m)\\
=&(k+1)(-i')+((i'-j'-1)+(i'-j')+...+m)+((j'-1)+j'+...+m)
\end{align*}
So
\begin{equation*}
(k+1)|i-i'|\leq 2m(m+1).
\end{equation*}
If $i\neq i'$, then we would have $k+1\leq 2m(m+1)$, contradicting our assumptions on $k$ and $m$. So $i=i'$ and we have
\begin{align*}
&((i-j-1)+(i-j)+...+m)+((j-1)+j+...+m)\\
=&((i-j'-1)+(i-j')+...+m)+((j'-1)+j'+...+m)
\end{align*}
We may solve this to get $j+j'=i$. Also, for $i'-j\geq 2$, we have the following:
\begin{align*}
c_{i'-j}-c_{i'-2}=&(k+1)(m-(i'-j)+2)+[(i'-j-1)+(i'-j)+...+m]\\
&-(k+1)(m-(i'-2)+2)-[(i'-3)+(i'-2)+...+m]\\
=&(k+1)(j-2)+(i'-j-1)+(i'-j)+...+(i'-4)\\
\geq&(k+1)(j-2)+1+2+...+(j-2)\\
=&(k+1)m+[1+2+...+m]\\
&-(k+1)(m+2-j)-[(j-1)+j+...+m]\\
=&c_2-c_j.
\end{align*}
Since for $i'\leq i$, we have $c_{i'-2}\geq c_{i-2}$, we therefore have that $c_{i'-j}+c_j\geq c_{i-2}+c_2$. Also, for $i-j\geq 1$, we have the following:
\begin{align*}
c_{i-j+1}-c_{i-1}=&(k+1)(m-(i-j+1)+2)+[(i-j)+(i-j+1)+...+m]\\
&-(k+1)(m-(i-1)+2)-[(i-2)+(i-1)+...+m]\\
=&(k+1)(j-2)+[(i-j)+(i-j+1)+...+(i-3)]\\
<&(k+1)(j-1)+1+2+...+(j-2)\\
=&(k+1)(m+1)+[1+2+...+m]\\
&-(k+1)(m-j+2)-[(j-1)+j+...+m]\\
=&c_1-c_j.
\end{align*}
So we have $c_1-c_j>c_{i-j+1}-c_{i-1}$. Thus for $i'-j\geq 2$ with $i'>i$, we have $c_{i'-j}\leq c_{i-j+1}$ so that $c_1-c_j>c_{i'-j}-c_{i-1}$ or $c_{i-1}+c_1>c_{i'-j}+c_j$.
\newline
\newline
Finally, by~\eqref{3} we have
\begin{align*}
c_{i-2}+c_2-c_{i-1}-c_1-1&=(c_{i-2}-c_{i-1})-(c_1-c_2)-1\\
&=k+(i-2)-(k+1)-1\\
&=i-4.
\end{align*}
All of the above calculations when applied to our expression for $u^2$ allow us to conclude that
\begin{align*}
s_2(u^2)=&s_2(2^{c_2+c_1+1}(2^{k}-1))\\
&+\sum_{i=4}^{m+3}\left(\left(s_2\left(\sum_{j=3}^{i-2}2^{c_{i-j}+c_j}\right)\right)+s_2(2^{c_{i-1}+c_1+1}(2^{i-4}-1))\right)\\
&+\sum_{i=m+4}^{2m+3}s_2\left(\sum_{j=i-(m+2)}^{m+2}2^{c_{i-j}+c_j}\right)+s_2(2^{c_1+1}(2^{n-k(m+1)-(1+2+...+m)-(m+1)-1}-1))\\
&+s_2\left(\sum_{i=2}^{m+2}2^{c_i+1}+1\right).
\end{align*}
As well, the above calculations also show that the only terms in the above expression that depend on $n$ and/or $k$ are $s_2(2^{c_2+c_1+1}(2^{k}-1))$ and $s_2(2^{c_1+1}(2^{n-k(m+1)-(1+2+...+m)-(m+1)-1}-1))$. The rest of the terms will add up to a value that only depends on $m$, which we will call $e$. Let $d=(1+2+...+m)+(m+1)+1$ and, by assumption, suppose that $n>(m+1)k+d$. Then $s_2(u^2)=k+n-k(m+1)+e=n-mk+e$.
\end{proof}

\begin{thm}\label{thm2}
For all rational $0<r<1$, there exists a $u\in\mathbb{N}$ such that
\begin{equation*}
\frac{s_2(u^2)}{s_2(u)}=r.
\end{equation*}
\end{thm}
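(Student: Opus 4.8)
The plan is to treat Theorem~\ref{thm1} as a black box and solve for the construction parameters. Write $r=p/t$ in lowest terms with $0<p<t$, and note that $\gcd(p,t-p)=\gcd(p,t)=1$. For the family of $u$ in Theorem~\ref{thm1} we have $s_2(u^2)=n-mk+e_1$ and $s_2(u)=n+k(m+1)+e_2$, where $e_1,e_2,d$ depend only on $m$. Imposing $\frac{n-mk+e_1}{n+k(m+1)+e_2}=\frac{p}{t}$ and clearing denominators gives the linear relation $(t-p)n=k\bigl(m(t+p)+p\bigr)+(pe_2-te_1)$, so $n$ is forced to equal $\frac{k(m(t+p)+p)+pe_2-te_1}{t-p}$. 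Thus the whole problem reduces to choosing $m$ and $k$ so that this $n$ is a positive integer meeting the hypotheses $k>2m(m+1)-1$ and $n>(m+1)k+d$ of Theorem~\ref{thm1}.

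The integrality of $n$ is the crux, and I would handle it by a congruence argument. Reducing the numerator modulo $t-p$ and using $t\equiv p$, one gets $m(t+p)+p\equiv p(2m+1)$ and $pe_2-te_1\equiv p(e_2-e_1)$, so the numerator is congruent to $p\bigl[(2m+1)k+(e_2-e_1)\bigr]\pmod{t-p}$. Since $\gcd(p,t-p)=1$, integrality of $n$ is equivalent to the single congruence $(2m+1)k\equiv-(e_2-e_1)\pmod{t-p}$ in the unknown $k$, which is solvable as soon as $\gcd(2m+1,t-p)=1$. I would therefore fix an odd prime $\ell>t-p$ and set $m=(\ell-1)/2$; then $2m+1=\ell$ is coprime to $t-p$, so an admissible residue class $k\equiv k_0\pmod{t-p}$ exists no matter what the (a priori unknown) value of $e_2-e_1$ turns out to be.

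It then remains only to satisfy the size constraints, which is easy once $m$ is fixed. With $m$ (and hence $e_1,e_2,d$) fixed, $n$ grows linearly in $k$ with leading coefficient $\frac{m(t+p)+p}{t-p}$, and a one-line computation shows this coefficient exceeds $m+1$ exactly when $2p(m+1)>t$; this holds automatically because $\ell$, and hence $m$, was chosen large. Taking $k$ in the admissible residue class and sufficiently large then makes $k>2m(m+1)-1$ and $n-(m+1)k\to\infty$, so $n>(m+1)k+d$ and $n$ is a positive integer. Feeding these values of $m,k,n$ into Theorem~\ref{thm1} yields $\frac{s_2(u^2)}{s_2(u)}=\frac{n-mk+e_1}{n+k(m+1)+e_2}=\frac{p}{t}=r$, as desired.

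The main obstacle is precisely this integrality: since $e_1$ and $e_2$ are only guaranteed to be constants depending on $m$, we have no direct handle on $e_2-e_1$, and the device of enlarging $m$ so that $2m+1$ is coprime to $t-p$ is what makes the relevant congruence solvable uniformly in $r$. Everything else---positivity of $n$ and the two inequality hypotheses---follows by taking $k$ large within the chosen residue class.
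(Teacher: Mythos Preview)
Your proposal is correct and follows essentially the same approach as the paper: choose $m$ so that $\gcd(2m+1,t-p)=1$ and $\frac{1}{2(m+1)}<r$, solve the resulting linear congruence for integrality, and push a free parameter to infinity to meet the size hypotheses of Theorem~\ref{thm1}. The only cosmetic difference is the parametrization---the paper introduces an auxiliary scaling parameter and writes $k,n$ in terms of it, whereas you solve for $n$ as a function of $k$ and run $k$ through an arithmetic progression---but the underlying mechanism is identical.
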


\begin{proof}
Let $r=\frac{a}{c}<1$ with $\gcd(a,c)=1$. Pick $m$ such that $\frac{1}{2(m+1)}<\frac{a}{c}$ and $\gcd(2m+1,c-a)=1$. We consider the construction for $u$ as given in Theorem \ref{thm1}. We have $s_2(u)=n+k(m+1)+e_2$ for some constant $e_2$ depending only on $m$ and $s_2(u^2)=n-mk+e_1$ for some constant $e_1$ also only depending on $m$ where $k>2m(m+1)-1$ and $n>(m+1)k+d$ with $d$ as defined in the proof of Theorem \ref{thm1}. We want to pick suitable values for $k$ and $n$ such that the construction of $u$ gives us exactly the ratio $\frac{a}{c}$. We achieve this as follows. Pick $t\in\mathbb{N}$ such that
\begin{equation*}
t>2m(d+e_1)+d+2e_1-e_2,
\end{equation*}
\begin{equation*}
t>e_1,
\end{equation*}
and
\begin{equation}\label{kvalue}
k:=\frac{t(c-a)+e_1-e_2}{2m+1}\in\mathbb{N}.
\end{equation}
where $t$ is sufficiently large to allow for $k>2m(m+1)$. Such a $t$ is possible by the Chinese Remainder Theorem as $\gcd(2m+1,c-a)=1$ and $a<c$. Our goal is to show that we can use \eqref{kvalue} for our value of $k$. For our value of $n$, we use $n:=ta+mk-e_1\in\mathbb{N}$. We see this is positive as $t>e_1$. We need to show that
\begin{equation*}
(m+1)k+d<n.
\end{equation*}
so that we can apply Theorem \ref{thm1}. To achieve this, we show that
\begin{equation*}
k+d+e_1<ta.
\end{equation*}
 We have $2m(d+e_1)+d+2e_1-e_2<t\leq2(m+1)ta-tc$. Thus we have the following:
\begin{align*}
\frac{t(c-a)+e_1-e_2}{2m+1}+d+e_1&=\frac{tc-ta+e_1-e_2+2md+d+2me_1+e_1}{2m+1}\\
&=\frac{tc-ta+2m(d+e_1)+d+2e_1-e_2}{2m+1}\\
&<\frac{tc-ta+2(m+1)ta-tc}{2m+1}\\
&=\frac{(2m+1)ta}{2m+1}=ta.
\end{align*}
Hence $(m+1)k+d<n$ as required for the hypothesis of Thoerem $1$. We thus can now use Theorem \ref{thm1}. Let $(u)_2=1^{(k)}01^{(k+1)}01^{(k+2)}...01^{(k+m)}01^{(n)}$. Thus we have
\begin{align*}
\frac{s_2(u^2)}{s_2(u)}&=\frac{n-mk+e_1}{n+k(m+1)+e_2}\\
&=\frac{(ta+mk-e_1)-mk+e_1}{(ta+mk-e_1)+k(m+1)+e_2}\\
&=\frac{ta}{ta-e_1+\left(\frac{t(c-a)+e_1-e_2}{2m+1}\right)(2m+1)+e_2}\\
&=\frac{ta}{ta-e_1+t(c-a)+e_1-e_2+e_2}\\
&=\frac{ta}{tc}=r.
\end{align*}
This proves the result as required.
\end{proof}
Having obtained our result for $0<r<1$, we now go on to the case of $r\geq 1$.

\begin{lemma}\label{lem15}
If there exists a $u\in\mathbb{N}$ such that
\begin{equation}\label{ratio}
\frac{s_2(u^2)}{s_2(u)}=r,
\end{equation}
then there exists a $v\in\mathbb{N}$ such that
\begin{equation}
\frac{s_2(v^2)}{s_2(v)}=\frac{3}{2}r.
\end{equation}
\end{lemma}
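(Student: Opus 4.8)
The plan is to take the number $u$ witnessing $s_2(u^2)/s_2(u)=r$ and construct from it a new number $v$ whose base-2 expansion consists of well-separated copies of $u$ (with zero padding between copies), so that squaring $v$ produces a sum of cross terms $u_i u_j$ that do not interfere digit-wise. The key idea is that if $v$ is a concatenation of shifted copies of $u$, say $v=\sum_{i} u\cdot 2^{N_i}$ with the shifts $N_i$ spaced far apart, then $v^2=\sum_{i,j} u^2\cdot 2^{N_i+N_j}$, and for suitably chosen spacings all the resulting blocks occupy disjoint digit ranges with no carries. Under that non-interference, $s_2$ is additive over the blocks, giving $s_2(v)=(\text{number of copies})\cdot s_2(u)$ and $s_2(v^2)=(\text{number of cross terms})\cdot s_2(u^2)$, and the ratio scales by the combinatorial factor.

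To get the factor $\tfrac32$ specifically, I would take three copies (a linear factor of $3$ in $s_2(v)$) and arrange the shifts $N_1,N_2,N_3$ so that among the nine products $u^2\cdot 2^{N_i+N_j}$ exactly the right number survive as distinct non-overlapping contributions. Concretely, choosing the exponents so that the three ``diagonal'' terms $2N_i$ and some of the six ``off-diagonal'' sums $N_i+N_j$ either coincide in a controlled way or cancel is the mechanism that turns the naive factor of $9/3=3$ into $3/2$; one natural route is to let two of the copies combine (e.g.\ via a shift difference that makes $u\cdot 2^{N_1}+u\cdot 2^{N_2}$ behave like a single block whose square contributes a controlled digit count) while the third sits far away. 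I would write $v=u(2^{a}+2^{b}+2^{c})$ with $a>b>c$ and all gaps exceeding $2\lfloor\log_2 u^2\rfloor$ so that the six distinct block-products $u^2 2^{2a}, u^2 2^{2b}, u^2 2^{2c}, u^2 2^{a+b+1}, u^2 2^{a+c+1}, u^2 2^{b+c+1}$ are digit-disjoint, yielding $s_2(v^2)=6\,s_2(u^2)$ and $s_2(v)=3\,s_2(u)$, hence the ratio $2r$; to instead reach $\tfrac32 r$ I would force exactly one coincidence among these six exponents, collapsing the count from $6$ to $\tfrac92$ copies' worth by making a diagonal term and an off-diagonal term share a range in a way that halves one contribution.

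The main obstacle will be controlling carries and overlaps precisely: ensuring that when blocks are placed the factor-of-$2$ from the cross terms (the $2\cdot u^2 2^{N_i+N_j}$ for $i\neq j$) is absorbed as a clean shift rather than creating carries into a neighboring block, and ensuring that the chosen coincidence produces exactly the factor $\tfrac32$ and not some other rational. I would handle this by demanding the inter-block gaps strictly exceed twice the bit-length of $u^2$, which guarantees that each product $u^2 2^{N_i+N_j}$ (including the doubling shift) lands in its own digit window with a buffer of zeros on both sides, so $s_2$ is genuinely additive. The verification that the gaps can be chosen to realize precisely the $\tfrac32$ factor—rather than an integer multiple—is the delicate point; once the block structure and spacing inequalities are fixed, the digit-sum computation is a routine additivity argument, and the resulting $v$ satisfies $s_2(v^2)/s_2(v)=\tfrac32 r$ as claimed.
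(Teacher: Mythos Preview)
Your general strategy---building $v$ from shifted, non-overlapping copies of $u$ so that $s_2$ is additive over blocks---is exactly right, but you have chosen the wrong number of copies and the attempted repair does not work. With three copies $v=u(2^a+2^b+2^c)$ and wide spacing you correctly obtain $s_2(v)=3\,s_2(u)$ and $s_2(v^2)=6\,s_2(u^2)$, giving the ratio $2r$. To bring this down to $\tfrac32 r$ while keeping $s_2(v)=3\,s_2(u)$ you would need $s_2(v^2)=\tfrac92\,s_2(u^2)$, which is not an integer in general; there is no way to realise ``$9/2$ copies' worth'' of a digit sum. Forcing one coincidence among the six exponents $2a,2b,2c,a+b+1,a+c+1,b+c+1$ merely merges two copies of $u^2$ into a single shifted copy (since $u^2 2^{e}+u^2 2^{e}=u^2 2^{e+1}$), dropping the count from $6$ to $5$ and yielding $\tfrac53 r$, not $\tfrac32 r$. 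Any ``half contribution'' would require the overlapping copies of $u^2$ to interact through carries in a way that depends on the specific bit pattern of $u^2$, not just on $s_2(u^2)$, so the argument cannot be made uniform in $u$.

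The fix is to use \emph{two} copies rather than three. Take $v=(2^{N}+1)u$ with $N$ large enough that the two copies of $u$ are disjoint (say $2^{N/2}>u$, so also $2^N>u^2$). Then $s_2(v)=2\,s_2(u)$, while
\[
v^2=(2^{N}+1)^2u^2=2^{2N}u^2+2^{N+1}u^2+u^2,
\]
and for $N\ge 2$ the three exponents $2N,\;N+1,\;0$ are distinct and spaced far enough (given $2^N>u^2$) that the three copies of $u^2$ occupy disjoint digit windows. Hence $s_2(v^2)=3\,s_2(u^2)$ and $s_2(v^2)/s_2(v)=\tfrac32 r$. This is precisely the paper's proof (with $N=2w+1$); the point you missed is that the cross term $2\cdot 2^{N}=2^{N+1}$ already supplies the ``extra'' copy for free, so two shifts in $v$ become three in $v^2$.
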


\begin{proof}
Let $u\in\mathbb{N}$ hold for such an $r$ in~\eqref{ratio}. Take $v=(2^{2w+1}+1)u$ where $2^w>u$. Then we have
\begin{equation*}
v^2=(2^{2w+1}+1)^2u^2=2^{4w+2}u^2+2^{2w+2}u^2+u^2
\end{equation*}
We have that $2^{2w}>u^2$ and $2^{4w+2}>2^{2w+2}u^2$. Thus
\begin{equation*}
s_2(v^2)=3\cdot s_2(u^2).
\end{equation*}
So we have
\begin{equation*}
\frac{s_2(v^2)}{s_2(v)}=\frac{3\cdot s_2(u^2)}{2\cdot s_2(u)}=\frac{3}{2}r.
\end{equation*}
\end{proof}

\begin{thm}\label{thm4}
For all positive rationals $r$ there exists a $u\in\mathbb{N}$ such that
\begin{equation}\label{2}
\frac{s_2(u^2)}{s_2(u)}=r.
\end{equation}
\end{thm}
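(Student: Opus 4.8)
The plan is to combine Theorem \ref{thm2}, which already settles the case $0<r<1$, with Lemma \ref{lem15}, which lets me multiply an achievable ratio by $3/2$ while preserving achievability. The strategy is to show that every positive rational $r\geq 1$ can be reached by starting from some achievable ratio $r_0$ in the range $(0,1)$ and applying the multiplication-by-$3/2$ operation of Lemma \ref{lem15} a finite number of times. Equivalently, I want to write $r = (3/2)^j \cdot r_0$ for some nonnegative integer $j$ and some rational $r_0$ with $0<r_0<1$, and then invoke Theorem \ref{thm2} to realize $r_0$ and Lemma \ref{lem15} $j$ times to climb up to $r$.

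First I would handle the case $r\geq 1$ by choosing $j$ to be the smallest nonnegative integer such that $r/(3/2)^j < 1$; setting $r_0 := r/(3/2)^j = (2/3)^j r$ then gives a positive rational strictly less than $1$, and by minimality of $j$ we also have $r_0 \geq 2/3$, so in particular $r_0 > 0$. Since $r_0$ is a positive rational with $0<r_0<1$, Theorem \ref{thm2} produces a $u_0\in\mathbb{N}$ with $s_2(u_0^2)/s_2(u_0)=r_0$. Applying Lemma \ref{lem15} once yields a $u_1$ with ratio $(3/2)r_0$, and iterating the lemma $j$ times total produces a $u_j\in\mathbb{N}$ with
\begin{equation*}
\frac{s_2(u_j^2)}{s_2(u_j)}=\left(\frac{3}{2}\right)^j r_0=\left(\frac{3}{2}\right)^j\left(\frac{2}{3}\right)^j r=r,
\end{equation*}
which is exactly the desired conclusion \eqref{2}. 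The case $0<r<1$ requires no new work, as it is precisely Theorem \ref{thm2}, and the boundary possibility $r=1$ (which falls under $r\geq 1$) is covered by taking $j\geq 1$ so that $r_0<1$ strictly.

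The one point that needs care — and which I expect to be the only genuine obstacle — is verifying that the target $r_0$ obtained by dividing out powers of $3/2$ is still a rational number in the open interval $(0,1)$, so that Theorem \ref{thm2} genuinely applies; this is immediate since $r_0=(2/3)^j r$ is a ratio of integers and the minimality of $j$ pins it into $[2/3,1)$. A secondary bookkeeping concern is that Lemma \ref{lem15} is stated for a single multiplication, so I must make explicit that it can be applied repeatedly: each application takes an existing witness $u_i$ and produces a new witness $u_{i+1}=(2^{2w_i+1}+1)u_i$ with $2^{w_i}>u_i$, and since the hypothesis of the lemma is simply the existence of a natural number achieving the current ratio, the output of one application immediately satisfies the hypothesis of the next. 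Thus the induction on $j$ goes through cleanly, completing the proof.
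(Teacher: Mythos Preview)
Your proposal is correct and follows essentially the same approach as the paper: reduce an arbitrary positive rational $r$ to some $r_0=(2/3)^j r\in(0,1)$, realize $r_0$ via Theorem~\ref{thm2}, and then apply Lemma~\ref{lem15} $j$ times. Your write-up is in fact more careful than the paper's one-line proof, explicitly checking that $r_0$ lands in $(0,1)$ and that the lemma iterates.
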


\begin{proof}
Find $s\in\mathbb{N}$ such that $r_0=\left(\frac{2}{3}\right)^sr<1$. Construct $u$ such that $\frac{s_2(u^2)}{s_2(u)}=r_0$, which we can do by Theorem \ref{thm2}, and then use $s$ applications of Lemma \ref{lem15}.
\end{proof}

\section{General Base $q$ for the Case of $n^2$}\label{sec3}
Having proved that $\frac{s_2(u^2)}{s_2(u)}$ hits every positive rational number, we now go on to prove the analogous result for the general base. Again, we start with the range $0<r<1$.

\subsection{The case of $0<r<1$}
The following theorem gives the value of $s_q(u^2)$ for the particular construction of $u$ that we will use.

\begin{thm}\label{thm5}
Let $q\geq 3$ and $(u)_q=(q-1)^{(k)}(q-2)(q-1)^{(k+1)}(q-2)(q-1)^{(k+2)}...(q-2)(q-1)^{(k+m)}(q-2)(q-1)^{(n)}$ with $k>2m(m+1)-1$ and $n>(m+1)k+d$ for some constant explicitly computable constant $d>0$, depending only on $m$ and $q$. Then we have $s_q(u^2)=(q-1)(n-mk)+e_1$ and $s_q(u)=(q-1)(k(m+1)+n)+e_2$ for some explicitly computable constants $e_1$ and $e_2$ depending only on $m$ and $q$.
\end{thm}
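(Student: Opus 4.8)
The plan is to mirror the proof of Theorem~\ref{thm1} almost verbatim, adapting every computation from base $2$ to base $q$. The construction is the natural $q$-ary analogue: where the base-$2$ expansion used blocks of $1$s separated by single $0$s, the base-$q$ expansion uses blocks of $(q-1)$s (the largest digit) separated by single $(q-2)$s. First I would write $u$ as a closed-form expression analogous to $u=2^{c_1}-1-\sum_{i=2}^{m+2}2^{c_i}$. If all the $(q-2)$-digits and the $(q-1)$-blocks were promoted to the maximal digit $(q-1)$, we would get $q^{c_1}-1$ with $c_1$ as before; each interior $(q-2)$ then subtracts a single $q^{c_i}$ (since it is one less than $q-1$), giving
\begin{equation*}
u=q^{c_1}-1-\sum_{i=2}^{m+2}q^{c_i},
\end{equation*}
with the very same exponents $c_i$ and the very same gap identity $c_i-c_{i+1}=k+i$ from~\eqref{3}. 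This is the key point: the exponent bookkeeping is \emph{identical} to the base-$2$ case, so all the inequalities on the $c_i$ (distinctness of the cross-terms $c_{i-j}+c_j$, the ordering bounds $c_{i'-j}+c_j\geq c_{i-2}+c_2$ and $c_{i-1}+c_1>c_{i'-j}+c_j$, and the collision analysis forcing $i=i'$, $j+j'=i$) carry over unchanged, since they depend only on the assumption $k>2m(m+1)-1$, not on the base.

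Next I would expand $u^2=(q^{c_1}-1-\sum q^{c_i})^2$ exactly as in Theorem~\ref{thm1}. The new feature is that squaring in base $q$ produces coefficients larger than $1$: the cross terms $2q^{c_i+c_j}$ now carry a genuine factor of $2$, and diagonal terms $q^{2c_i}$ appear. I would therefore need to track digit \emph{values}, not merely count $1$s. The running theme is that a telescoping block such as $q^{a}(q^{b}-1)$ contributes $(q-1)b$ to the digit sum (it is a string of $b$ copies of $q-1$), which is where the universal factor $(q-1)$ in the statement originates; isolated coefficients like the $2$s contribute bounded, $m$-dependent amounts absorbed into $e_1$. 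The main genuine obstacle, and the place where base $q$ differs substantively from base $2$, is \textbf{verifying that no carries occur} (or that any carries are confined to the $m$-dependent ``junk'' and do not touch the $n$- and $k$-governed telescoping blocks). In base $2$ the disjointness of exponents automatically prevents interference, but here coefficients $2$ (and up to $(m+1)$-fold overlaps from the double sum) could in principle exceed $q-1$ and carry. I would handle this by invoking $q\geq 3$ together with the gap bound $k>2m(m+1)-1$: the separation $c_i-c_{i+1}=k+i$ is so large that the only coefficients that can collide are the $O(m)$-many cross-term multiplicities, each bounded by a function of $m$, so for $q$ large relative to $m$ they fit in a single digit, and for small $q$ any resulting carries propagate only within an $m$-bounded window and change the digit sum by an $m,q$-dependent constant.

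With carries controlled, I would isolate exactly the two terms that depend on $n$ and $k$ — the analogues of $q^{c_2+c_1+1}(q^{k}-1)$ and $q^{c_1+1}(q^{n-k(m+1)-(1+2+\cdots+m)-(m+1)-1}-1)$ — each a maximal-digit block of length $k$ and length $n-k(m+1)-(1+2+\cdots+m)-(m+1)-1$ respectively, contributing $(q-1)k$ and $(q-1)(n-k(m+1)-\cdots)$ to the digit sum. Summing these gives the $n$-dependent part $(q-1)(k+n-k(m+1)-(1+2+\cdots+m)-(m+1))=(q-1)(n-mk)$ plus constants, while every remaining term is confined to an $m$- and $q$-bounded region and sums to a constant $e_1$. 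The computation of $s_q(u)$ is immediate from the construction: there are $k+(k+1)+\cdots+(k+m)+n$ digits equal to $q-1$ and $m+1$ digits equal to $q-2$, giving $s_q(u)=(q-1)(k(m+1)+(1+2+\cdots+m)+n)+(q-2)(m+1)=(q-1)(k(m+1)+n)+e_2$ with $e_2$ depending only on $m,q$. Defining $d=(1+2+\cdots+m)+(m+1)+1$ as before and using $n>(m+1)k+d$ to guarantee the long block has positive length completes the argument.
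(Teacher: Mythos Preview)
Your proposal is correct and follows exactly the paper's approach: the paper's proof of Theorem~\ref{thm5} is literally the single sentence ``Same as the proof of Theorem~\ref{thm1} with $2$ replaced by $q$,'' which is precisely what you have fleshed out. One small remark: your worry about carries is overstated, since the collision analysis you quote already shows that $c_{i-j}+c_j=c_{i-j'}+c_{j'}$ forces $j+j'=i$, so coincident exponents occur at most in pairs and the resulting coefficient is at most $2$, a valid digit for every $q\geq 3$; in any event the paper never resolves these carries explicitly but simply observes (as you also do) that each inner sum is confined to an $m$-bounded window disjoint from the two long $(q-1)$-blocks, hence contributes only to the constant $e_1$.
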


\begin{proof}
Same as the proof of Theorem \ref{thm1} with $2$ replaced by $q$.
\end{proof}
We now prove our result for every rational $r$ with $0<r<1$. First observe
\begin{lemma}\label{lem1}
For all $u\in\mathbb{N}$ and for all $q\geq 2$, we have that $(q-1)|u$ if and only if $(q-1)|s_q(u)$.
\end{lemma}
We will use Lemma \ref{lem1} to prove the following theorem:
\begin{thm}\label{thm6}
For all rational $0<r<1$, there exists a $u\in\mathbb{N}$ such that
\begin{equation*}
\frac{s_q(u^2)}{s_q(u)}=r.
\end{equation*}
\end{thm}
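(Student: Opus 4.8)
The plan is to mirror the structure of Theorem~\ref{thm2}, which handled the base-$2$ case, but now to exploit Theorem~\ref{thm5} together with the divisibility criterion of Lemma~\ref{lem1}. Write $r=\frac{a}{c}$ with $\gcd(a,c)=1$. From Theorem~\ref{thm5} we have the two expressions
\begin{equation*}
s_q(u^2)=(q-1)(n-mk)+e_1,\qquad s_q(u)=(q-1)(k(m+1)+n)+e_2,
\end{equation*}
valid whenever $k>2m(m+1)-1$ and $n>(m+1)k+d$. The goal is to choose the free parameters $m$, $k$, and $n$ so that the ratio collapses to exactly $\frac{a}{c}$, just as the numerator and denominator were forced to equal $ta$ and $tc$ in the binary proof.

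\textbf{Main steps.} First I would record that, by Lemma~\ref{lem1}, the constants $e_1$ and $e_2$ are not arbitrary: since $u$ and $u^2$ are each congruent to fixed residues mod $q-1$ that depend only on the digit pattern, the quantities $s_q(u)$ and $s_q(u^2)$ are pinned down modulo $q-1$, and I would use this to verify that the divisibility conditions needed below are actually satisfiable. Next, following the template of Theorem~\ref{thm2}, I would select $m$ large enough that $\frac{1}{2(m+1)}<\frac{a}{c}$ (to keep the target ratio in the achievable range) together with a suitable coprimality condition of the form $\gcd(2m+1,\,c-a)=1$ (or its $q$-analogue) so that a Chinese Remainder Theorem argument produces an integer $t$ for which
\begin{equation*}
k:=\frac{t(c-a)+e_1-e_2}{(2m+1)(q-1)}\in\mathbb{N}
\end{equation*}
is a positive integer exceeding $2m(m+1)$; here the extra factor of $q-1$ accounts for the $(q-1)$ multiplying the principal terms in Theorem~\ref{thm5}. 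I would then set $n:=ta+mk-\tfrac{e_1}{q-1}$ (adjusting so $n\in\mathbb{N}$), and carry out the same chain of inequalities as in Theorem~\ref{thm2}, bounding $t$ from below by an explicit constant in $m$, $q$, $d$, $e_1$, $e_2$, to guarantee $n>(m+1)k+d$. Finally, substituting these choices into the ratio and simplifying should telescope the constants away exactly as before, yielding $\frac{s_q(u^2)}{s_q(u)}=\frac{(q-1)ta}{(q-1)tc}=\frac{a}{c}=r$.

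\textbf{Expected obstacle.} The delicate point is the simultaneous arithmetic feasibility of the two integrality constraints. In the binary case the only requirement was that a single linear expression be an integer; here the factor $q-1$ enters both $s_q(u)$ and $s_q(u^2)$, so I must ensure that $e_1-e_2$ carries the right residue modulo $(2m+1)(q-1)$ for $k$ to be an integer, and that the numerator and denominator of the target ratio are compatible with the forced congruences coming from Lemma~\ref{lem1}. I expect this to be the main source of case analysis: one may need to impose a coprimality condition involving $q-1$ when choosing $m$, or to absorb a bounded correction into the $n$-term, so that the Chinese Remainder Theorem step still goes through. Once the right congruence conditions on $m$ are isolated, the remaining estimates are routine and parallel the proof of Theorem~\ref{thm2} verbatim.
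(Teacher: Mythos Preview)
Your overall strategy matches the paper's, but the formulas you write down do not produce the desired ratio for $q\geq 3$. With $k=\dfrac{t(c-a)+e_1-e_2}{(2m+1)(q-1)}$ and $n=ta+mk-\dfrac{e_1}{q-1}$ one computes $s_q(u^2)=(q-1)ta$ but $s_q(u)=t(c-a)+(q-1)ta$, so the ratio equals $\dfrac{(q-1)a}{c+(q-2)a}$, which is $a/c$ only when $q=2$. The correct reduction is to first show that $(q-1)\mid e_1$ and $(q-1)\mid e_2$, set $e_3=e_2/(q-1)$ and $e_4=e_1/(q-1)$, and then run the proof of Theorem~\ref{thm2} \emph{verbatim} with $e_4,e_3$ in place of $e_1,e_2$; the factor $q-1$ then cancels cleanly from numerator and denominator and no extra $(q-1)$ appears in the definition of $k$.

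The missing idea is exactly how to force $(q-1)\mid e_1$ and $(q-1)\mid e_2$, which your ``expected obstacle'' paragraph circles around without resolving. The paper imposes the \emph{divisibility} condition $(q-1)\mid(m+1)$ on $m$ (not a coprimality condition). The constant $e_2$ can be read off directly from the digit string of $u$: it is $e_2=(q-2)(m+1)+(q-1)t_m$ with $t_m=1+2+\cdots+m$, so $(q-1)\mid(m+1)$ immediately gives $(q-1)\mid e_2$ and hence $(q-1)\mid s_q(u)$. Lemma~\ref{lem1} then gives $(q-1)\mid u$, hence $(q-1)\mid u^2$, and a second application of Lemma~\ref{lem1} yields $(q-1)\mid s_q(u^2)$, whence $(q-1)\mid e_1$. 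To satisfy $(q-1)\mid(m+1)$ simultaneously with $\gcd(2m+1,c-a)=1$ and $\tfrac{1}{2(m+1)}<r$, the paper takes $2m+1$ to be a large prime with $2m+1\equiv -1\pmod{2q-2}$, which exists by Dirichlet's theorem.
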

\begin{proof}
Let $r=\frac{a}{c}<1$. Pick $m\in\mathbb N$ such that $(q-1)|(m+1)$, $\frac{1}{2(m+1)}<r<1$, and $\gcd(c-a,2m+1)=1$. For example, you could pick a value of $m$ sufficiently large with $2m+1$ being prime and
\begin{equation*}
2m+1\equiv -1\mod(2q-2),
\end{equation*}
which is possible by Dirichlet's Theorem. We can pick values $d$, $e_1$, $e_2$, and $k>2m(m+1)$ such that for every construction of $(u)_q$ in Theorem \ref{thm5}, we have $s_q(u)=(q-1)(k(m+1)+n)+e_2$ and $s_q(u^2)=(q-1)(n-mk)+e_1$ with $d$ as defined in the proof of Theorem \ref{thm1}. We can see that $e_2=(q-2)(m+1)+(q-1)t_m$ where $t_m=1+2+...+m$. Since $(q-1)|(m+1)$, we have $(q-1)|e_2$ and so $(q-1)|s_q(u)$. By Lemma \ref{lem1}, we thus have $(q-1)|u$ and so $(q-1)|u^2$. By Lemma \ref{lem1} again, we have $(q-1)|s_q(u^2)$ and so $(q-1)|e_1$. Let $e_3=e_2/(q-1)$ and $e_4=e_1/(q-1)$ so that $s_q(u)=(q-1)(k(m+1)+n+e_3)$ and $s_q(u^2)=(q-1)(n-mk+e_4)$.
\newline
\newline
The rest of the proof is the same as the proof of Theorem \ref{thm2} with $e_1$ replaced by $e_4$ and $e_2$ replaced by $e_3$.
\end{proof}

\subsection{The case of $r\geq 1$}
Our proof for the case of $r\geq 1$ in the general base $q$ case uses a construction almost identical with the construction used for the case of $0<r<1$. Unlike the base $2$ case, however, our construction for the proof of the ratio hitting every rational number $r$ in the range $0<r<1$ cannot be extended to all positive rational ratios. Thus, we require a slightly different construction to prove that we can hit every rational ratio $r$ in the range $\frac{1}{2}<r<1$. Indeed, we require three different constructions, one for $q=3$, another for $q=4$, and a last one for the most general case $q\geq 5$. We then extend our constructions to cover all greater positive rational ratios like in the base $2$ case. We first focus on the last case $q\geq 5$.

\begin{lemma}\label{lem2}
Let $(u)_q=(q-1)^{(k)}0(q-1)^{(n)}$ with $q\geq 5$, $n\geq k+2$, and $k\geq 1$. Then $s_q(u^2)=(q-1)(n+1)$ and $s_q(u)=(q-1)(n+k)$.
\end{lemma}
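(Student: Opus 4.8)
```latex
The plan is to compute $s_q(u^2)$ and $s_q(u)$ directly by squaring the explicit expression for $u$, exactly as was done in Theorem~\ref{thm1}, but exploiting the fact that the construction here is much shorter (only one interior zero) so that all the index bookkeeping collapses. First I would write $u$ in closed form. Since $(u)_q=(q-1)^{(k)}0(q-1)^{(n)}$, replacing the single $0$ by $q-1$ would give the repunit-type number $q^{k+n+1}-1$ (all digits $q-1$ across $k+1+n$ positions), so subtracting back the contribution of that one zero digit yields
\begin{equation*}
u=q^{k+n+1}-1-(q-1)q^{n}=q^{k+n+1}-(q-1)q^{n}-1.
\end{equation*}
Computing $s_q(u)$ is then immediate from the digit string: there are $k$ copies of $q-1$, then a $0$, then $n$ copies of $q-1$, giving $s_q(u)=(q-1)(n+k)$, which matches the claim.

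The substance is $s_q(u^2)$. I would expand $u^2=\bigl(q^{k+n+1}-(q-1)q^{n}-1\bigr)^2$ into six terms and regroup them into a valid base-$q$ expansion with non-overlapping digit blocks. The three large ``square'' terms $q^{2(k+n+1)}$, $(q-1)^2q^{2n}$, and $1$ sit at widely separated scales (here the hypotheses $q\ge5$, $n\ge k+2$, and $k\ge1$ are what guarantee the blocks do not collide and that the carries stay local), while the three cross terms $-2(q-1)q^{n+k+n+1}$, $-2q^{k+n+1}$, and $+2(q-1)q^{n}$ shift and subtract. The key step is to resolve each subtraction into a clean run of base-$q$ digits by borrowing: a block of the form $q^{a}-q^{b}$ with $a>b$ expands as $(q-1)$ repeated $a-b$ times, contributing $(q-1)(a-b)$ to the digit sum, and I would verify that under the stated inequalities every such resolved block lands in its own range of exponents with no interaction with its neighbours. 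Once the expansion is assembled, summing the digits block by block should collapse—after the $n$-dependent and $k$-dependent contributions cancel against each other—to the stated value $(q-1)(n+1)$.

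The main obstacle I expect is controlling the carries precisely enough to be sure no two blocks overlap and that the ``$+2$'' and ``$-2$'' coefficients from the cross terms combine correctly with the square terms: when a coefficient $2$ or $2(q-1)$ meets an adjacent borrow it can either produce an extra carry or annihilate a digit, and the role of $q\ge5$ is exactly to keep these small coefficients strictly below $q$ so that no unexpected propagation occurs (this is why the cases $q=3$ and $q=4$ must be handled separately, as the surrounding text anticipates). I would therefore organize the verification by pinning down the exact exponent ranges $[2n,\,2n+\text{something}]$, $[n+k,\,\ldots]$, and $[2(k+n+1),\,\ldots]$ occupied by each resolved block, check the gaps between consecutive ranges are positive using $n\ge k+2$ and $k\ge1$, and only then read off the digit sum. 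The cancellation of all $n$- and $k$-dependent terms in the final count, leaving the constant $(q-1)(n+1)$, is the structural fact that makes this lemma the building block for the $r\ge1$ case, mirroring how Theorem~\ref{thm1} fed into Theorem~\ref{thm2}.
```
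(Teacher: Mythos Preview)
Your proposal is correct and takes essentially the same approach as the paper: write $u$ in closed form, square it, regroup into non-overlapping base-$q$ digit blocks, and read off the digit sum. The paper streamlines the algebra slightly by first rewriting $-(q-1)q^{n}=-q^{n+1}+q^{n}$ before squaring, so that every monomial in $u$ carries coefficient $\pm1$ and the regrouping into the five blocks $q^{k+2n+2}(q^{k}-2)+2q^{k+2n+1}+q^{2n+1}(q-2)+q^{k+n+1}(q^{n-k-1}-2)+(2q-2)q^{n}+1$ falls out with no $(q-1)^{2}$ terms to unpack; you would reach the same decomposition but with one extra simplification step. One small wording slip: you speak of ``cancellation of all $n$- and $k$-dependent terms'' leaving $(q-1)(n+1)$, but of course only the $k$-dependence cancels.
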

\begin{proof}
We have $u=q^{k+n+1}-(q-1)q^{n}-1=q^{k+n+1}-q^{n+1}+q^n-1$. Thus we have
\begin{align}
u^2&=(q^{k+n+1}-q^{n+1}+q^{n}-1)^2\\
&=q^{2k+2n+2}+q^{2n+2}+q^{2n}+1-2q^{k+2n+2}+2q^{k+2n+1}-2q^{k+n+1}-2q^{2n+1}+2q^{n+1}-2q^n\\
&=q^{2k+2n+2}-2q^{k+2n+2}+2q^{k+2n+1}+q^{2n+2}-2q^{2n+1}+q^{2n}-2q^{k+n+1}+2q^{n+1}-2q^n+1\\
&=q^{k+2n+2}(q^{k}-2)+2q^{k+2n+1}+q^{2n+1}(q-2)+q^{k+n+1}(q^{n-k-1}-2)+(2q-2)q^n+1\label{4}
\end{align}
Thus, we have
\begin{align*}
s_q(u^2)&=k(q-1)-1+2+(q-2)+(q-1)(n-k-1)-1+q-1+1\\
&=(q-1)(n-1)+2q-2\\
&=(q-1)(n+1).
\end{align*}
\end{proof}

\begin{lemma}\label{lem3}
For every $q\geq 5$ and rational number $r$ with $1/2<r<1$, there exists $u\in\mathbb{N}$ such that
\begin{equation*}
\frac{s_q(u^2)}{s_q(u)}=r.
\end{equation*}
\end{lemma}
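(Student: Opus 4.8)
The plan is to read the ratio directly off Lemma \ref{lem2} and then solve for the two free parameters. For the construction $(u)_q=(q-1)^{(k)}0(q-1)^{(n)}$, Lemma \ref{lem2} gives
\begin{equation*}
\frac{s_q(u^2)}{s_q(u)}=\frac{(q-1)(n+1)}{(q-1)(n+k)}=\frac{n+1}{n+k},
\end{equation*}
so the factor $(q-1)$ cancels and the task of hitting a prescribed rational $r$ becomes purely combinatorial in $n$ and $k$, independent of $q$. This cancellation is the crucial simplification: once Lemma \ref{lem2} is in hand, the base $q$ plays no further role, and all the work for every $q\geq 5$ is handled at once.

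First I would write $r=a/c$ in lowest terms, so that $\gcd(a,c)=1$ and, since $1/2<r<1$, we have $c/2<a<c$; in particular $2a-c$ is a positive integer. I then impose $\frac{n+1}{n+k}=\frac{a}{c}$. Because $\gcd(a,c)=1$, every solution must be of the form $n+1=at$ and $n+k=ct$ for some positive integer $t$, which I take as the free parameter. Solving yields
\begin{equation*}
n=at-1,\qquad k=(c-a)t+1.
\end{equation*}

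It then remains to verify that, for $t$ large, these values satisfy the hypotheses of Lemma \ref{lem2}, namely $n,k\in\mathbb{N}$, $k\geq 1$, and $n\geq k+2$. The condition $k\geq 1$ is automatic since $c>a$ and $t\geq 1$, and $n=at-1$ is a positive integer once $at\geq 2$ (which holds as $a\geq 2$). The only genuine constraint is $n\geq k+2$, which after substitution rearranges to $(2a-c)t\geq 4$. This is precisely the step where the hypothesis $r>1/2$ enters: it guarantees $2a-c>0$, so the inequality holds for all sufficiently large $t$. Choosing any such $t$ and forming the corresponding $u$, Lemma \ref{lem2} applies and gives $\frac{s_q(u^2)}{s_q(u)}=\frac{n+1}{n+k}=\frac{at}{ct}=\frac{a}{c}=r$, as desired.

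I expect no serious obstacle; the entire content is carried by Lemma \ref{lem2}, and what remains is an elementary parametrization. The one point worth flagging is that the same computation explains the restriction to $r>1/2$: when $r\leq 1/2$ one has $2a-c\leq 0$, so $n\geq k+2$ can never hold and this construction breaks down. Reaching all rationals $r\geq 1$ will therefore require a separate amplification device, analogous to the role of Lemma \ref{lem15} in the base-$2$ argument.
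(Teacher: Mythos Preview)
Your proof is correct and essentially the same as the paper's: both invoke Lemma~\ref{lem2} and solve $\frac{n+1}{n+k}=\frac{a}{c}$ via $n=at-1$, $k=(c-a)t+1$, with the paper simply fixing $t=4$ (which always satisfies $(2a-c)t\geq 4$ since $2a-c\geq 1$) rather than leaving $t$ as a free parameter. Your closing remarks on why $r>1/2$ is needed and on the separate amplification step are also in line with the paper's strategy.
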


\begin{proof}
Write $r=\frac{4a}{4c}$ where $\gcd(a,c)=1$ and $\frac{1}{2}<r<1$. Let $n:=4a-1$ and $k:=4c-4a+1$. Notice that $n\geq k+2$ because $4c<8a$ and so $4c+3<8a$ so that $(4c-4a+1)+2\leq 4a-1$. Also, $a<c$ so $1<4c-4a+1=k$. By Lemma \ref{lem2}, we have for $(u)_q=(q-1)^{(k)}0(q-1)^{(n)}$,
\begin{equation*}
\frac{s_q(u^2)}{s_q(u)}=\frac{(q-1)4a}{(q-1)(4a-1+4c-4a+1)}=\frac{4a}{4c}=r.
\end{equation*}
\end{proof}
Now we calculate out $s_q(2u^2)$:

\begin{lemma}\label{lem4}
Let $u$ be as in Lemma \ref{lem2} with $q\geq 5$. We have $s_q(2u^2)=(q-1)(n+1)$.
\end{lemma}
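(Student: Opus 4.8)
The plan is to mirror the proof of Lemma~\ref{lem2}: take the closed form~\eqref{4} for $u^2$, double it, and read off the base-$q$ digit sum of the result. The cleanest route is to combine this with the standard carry identity
\begin{equation*}
s_q(2N)=2s_q(N)-(q-1)c,
\end{equation*}
where $c$ is the number of carries produced in the addition $N+N$. Since Lemma~\ref{lem2} already supplies $s_q(u^2)=(q-1)(n+1)$, the desired equality $s_q(2u^2)=(q-1)(n+1)$ is equivalent to the single combinatorial statement that doubling $u^2$ produces exactly $c=n+1$ carries. So I would reduce the whole computation to counting carries.

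To count them I first record the exact digit pattern of $u^2$ read off from~\eqref{4}. From the most significant end it is: the block coming from $q^{k+2n+2}(q^k-2)$, namely a run of $(q-1)$'s ending in a single $(q-2)$; an isolated digit $2$ from $2q^{k+2n+1}$; an isolated $(q-2)$ from $q^{2n+1}(q-2)$; the block from $q^{k+n+1}(q^{n-k-1}-2)$, again a run of $(q-1)$'s ending in $(q-2)$; the two digits of $(2q-2)q^n=q^{n+1}+(q-2)q^n$, i.e.\ a $1$ above a $(q-2)$; and a trailing $1$. Consecutive blocks are separated by runs of zeros. A carry is emitted exactly at those positions whose digit, after adding the incoming carry, reaches $q$; for $q\ge 5$ every digit equal to $q-1$ or $q-2$ does this regardless of the incoming carry, while $0$, $1$, and the lone $2$ do not. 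Summing, the two long blocks contribute $(k-1)+(n-k-2)$ carries from their $(q-1)$'s and one apiece from their terminal $(q-2)$'s, the isolated $(q-2)$ at position $2n+1$ contributes one, and the $(q-2)$ inside $(2q-2)q^n$ contributes one, for a total of $(n-3)+2+1+1=n+1$.

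If one prefers to avoid the carry identity, the equivalent direct route is to double each block by hand and sum the resulting digits. With carry-in $0$ at its bottom a block $(q-1)^{(j)}(q-2)$ becomes $(q-1)^{(j)}(q-4)$ and throws a single carry upward; the isolated $(q-2)$ becomes $(q-4)$ and throws a carry; the pair $1,(q-2)$ from $(2q-2)q^n$ absorbs its own internal carry and becomes $3,(q-4)$; and the lone $2$ becomes $4$. Each escaping carry lands in a zero (turning it into a $1$) without cascading, and tallying these digit contributions again gives $(q-1)(n+1)$.

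The step I expect to be the main obstacle is the carry bookkeeping at the junctions between blocks, and in particular confirming that the isolated digit $2$ coming from $2q^{k+2n+1}$ neither absorbs nor emits a carry. This is precisely where the hypotheses are needed: $q\ge 5$ keeps $2\cdot 2=4$ strictly below $q$, and the run of zeros beneath this digit (present once $k\ge 2$, as in every application of the lemma) insulates it from the carry thrown off by the $(q-2)$ at position $2n+1$, so no spurious carry is created there. Once this insulation and the non-cascading of each single carry into the zeros above are verified, the count $c=n+1$ (equivalently, the direct digit sum) yields $s_q(2u^2)=(q-1)(n+1)$.
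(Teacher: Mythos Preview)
Your proposal is correct. The paper's own proof is exactly your ``direct route'': it doubles each summand in~\eqref{4} to obtain
\[
2u^2=q^{k+2n+2}(2q^k-4)+4q^{k+2n+1}+q^{2n+1}(2q-4)+q^{k+n+1}(2q^{n-k-1}-4)+(4q-4)q^n+2
\]
and then reads off the base-$q$ digit sum of each block. Your primary approach via the carry identity $s_q(2N)=2s_q(N)-(q-1)c$ is a mild but genuine variant: it recycles the value $s_q(u^2)=(q-1)(n+1)$ from Lemma~\ref{lem2} and reduces the whole question to the single count $c=n+1$, which is arguably tidier bookkeeping than re-summing all the doubled blocks. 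Both arguments rest on the same digit pattern of $u^2$ and the same block separations. You are also more explicit than the paper about the one delicate junction: when $k=1$ the carry from position $2n+1$ lands directly on the digit $2$ at position $k+2n+1$, and for $q=5$ this would produce an extra carry; the paper's block-by-block sum tacitly assumes this does not happen, while you correctly note that $k\ge2$ (which holds in every application of the lemma, where in fact $k\ge5$) provides the needed zero buffer.
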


\begin{proof}
We have from~\eqref{4} that
\begin{equation*}
2u^2=q^{k+2n+2}(2\cdot q^k-4)+4q^{k+2n+1}+q^{2n+1}(2q-4)+q^{k+n+1}(2q^{n-k-1}-4)+(4q-4)q^n+2.
\end{equation*}
Thus
\begin{align*}
s_q(2u^2)&=1+k(q-1)-3+4+1+(q-4)+1+(n-k-1)(q-1)-3+3+(q-4)+2\\
&=(n-1)(q-1)+2q-2\\
&=(q-1)(n+1).
\end{align*}
\end{proof}

\begin{lemma}\label{lem5}
For every $q\geq 5$ and rational number $r$ with $1/2<r$, there exists $v\in\mathbb{N}$ such that
\begin{equation*}
\frac{s_q(v^2)}{s_q(v)}=r.
\end{equation*}
\end{lemma}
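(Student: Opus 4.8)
Lemma \ref{lem5} asks us to extend the result of Lemma \ref{lem3} — which handles the range $1/2 < r < 1$ for $q \geq 5$ — to the entire range $1/2 < r$, i.e., all rationals exceeding $1/2$ including those at least $1$. The plan is to mimic the strategy used in the base $2$ case (Lemma \ref{lem15} together with Theorem \ref{thm4}), where one takes a construction realizing a small ratio and multiplies $u$ by a well-chosen factor to scale the ratio up by a fixed rational multiple. Here the natural multiplier is suggested by Lemma \ref{lem4}: since $s_q(2u^2) = (q-1)(n+1) = s_q(u^2)$ while one expects $s_q(2u)$ to roughly double the digit sum of $u$, multiplying $u$ by $2$ should multiply the ratio by a factor close to $1/2$, or conversely, splicing together shifted copies of $u$ should multiply it by an integer.

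First I would establish a scaling lemma analogous to Lemma \ref{lem15}: given a $u$ with $\frac{s_q(u^2)}{s_q(u)} = r$, I would construct $v = (q^{2w+1} + 1)u$ for $w$ large enough that the nonzero blocks of $q^{2w+1}u$ and $u$ do not interact (no carries, no digit overlaps) in forming $v$, and similarly that the three terms in the expansion of $v^2$ occupy disjoint digit ranges. This would give $s_q(v) = 2\,s_q(u)$ and $s_q(v^2) = 3\,s_q(u^2)$, so that $\frac{s_q(v^2)}{s_q(v)} = \frac{3}{2}r$. The condition $2w > \log_q u$ (mirroring $2^w > u$ in Lemma \ref{lem15}) should suffice to guarantee the separation; I would verify that no carry propagation occurs by checking that the leading digit of each lower block and the trailing digit of each higher block are separated by at least one zero digit. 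This handles scaling by $3/2$, and iterating gives scaling by $(3/2)^s$ for any $s \in \mathbb{N}$.

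With such a scaling lemma in hand, the proof proceeds exactly as in Theorem \ref{thm4}: given any rational $r > 1/2$, I would choose $s \in \mathbb{N}$ so that $r_0 = \left(\frac{2}{3}\right)^s r$ lands in the interval $(1/2, 1)$, realize $r_0$ by Lemma \ref{lem3}, and then apply the scaling lemma $s$ times to recover $v$ with $\frac{s_q(v^2)}{s_q(v)} = r$. One must check that scaling by $3/2$ repeatedly can always bring an arbitrary $r > 1/2$ down into $(1/2,1)$; since each division by $3/2$ strictly decreases the value and the intervals $\left((1/2)(3/2)^j, (3/2)^j\right)$ cover $(1/2, \infty)$, a suitable $s$ always exists.

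The main obstacle will be the digit-separation bookkeeping in the scaling step. Because the construction of $u$ in Lemma \ref{lem2} uses digits $q-1$ and $q-2$ rather than isolated $1$s, multiplying by $(q^{2w+1}+1)$ and squaring can in principle produce carries where a digit would exceed $q-1$; I must confirm that the chosen gap $w$ forces all three summands in $v^2 = q^{4w+2}u^2 + 2q^{2w+1}u^2 + u^2$ into genuinely disjoint digit windows so that $s_q(v^2) = s_q(q^{4w+2}u^2) + s_q(2q^{2w+1}u^2) + s_q(u^2)$, and here Lemma \ref{lem4} is exactly what supplies $s_q(2u^2) = s_q(u^2)$, making the total $3\,s_q(u^2)$ rather than something carry-dependent. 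Establishing this clean additivity — rather than the routine arithmetic of the ratio computation — is the crux of the argument.
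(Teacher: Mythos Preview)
Your scaling step works once, but the iteration breaks. Lemma \ref{lem4} gives $s_q(2u^2)=s_q(u^2)$ only for $u$ of the specific shape in Lemma \ref{lem2}. After one application your new number is $v=(q^{2w+1}+1)u$, and to apply the scaling lemma again you would need $s_q(2v^2)=s_q(v^2)$. But $2v^2=2q^{4w+2}u^2+4q^{2w+1}u^2+2u^2$, so (with the blocks separated) $s_q(2v^2)=2s_q(2u^2)+s_q(4u^2)$, and nothing in the paper controls $s_q(4u^2)$. For small $q$ (e.g.\ $q=5,6,7$) the coefficient $4$ already interacts badly with the digits of $u^2$, and there is no reason to expect $s_q(4u^2)=s_q(u^2)$. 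So your argument only reaches ratios in $(1/2,1)\cup(3/4,3/2)=(1/2,3/2)$ and stalls there; any $r\geq 3/2$ needs $s\geq 2$ iterations, which you cannot justify.

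The paper sidesteps this by doing the whole amplification in a single step rather than iterating. Given the target $r>1/2$, it chooses $d$ with $\tfrac{1}{2}<\tfrac{d}{t_d}r<1$ (where $t_d=\tfrac{d(d+1)}{2}$), realizes $\tfrac{d}{t_d}r$ via Lemma \ref{lem3}, and sets
\[
v=\bigl(q^{2^d(m+1)}+q^{2^{d-1}(m+1)}+\cdots+q^{2(m+1)}\bigr)u.
\]
The exponents $2,4,\dots,2^d$ are chosen so that every pairwise sum $2^{e_1}+2^{e_2}$ is distinct; hence in $v^2$ each cross term carries coefficient exactly $2u^2$ and each diagonal term carries $u^2$, never $4u^2$ or higher. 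Then Lemma \ref{lem4} is invoked exactly once to get $s_q(v^2)=d\,s_q(u^2)+\binom{d}{2}s_q(2u^2)=t_d\,s_q(u^2)$, while $s_q(v)=d\,s_q(u)$, giving ratio $\tfrac{t_d}{d}\cdot\tfrac{d}{t_d}r=r$. Your $d=2$ case is exactly one instance of this; what you are missing is that to go further you must add more shifted copies \emph{at the outset}, with exponents arranged to keep all collision multiplicities at $2$, rather than iterate.
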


\begin{proof}
Let $t_d=1+2+...+d$. Take $1/2<r$. Pick $d\in\mathbb{N}$ such that $\frac{d+1}{2}<2r<d+1$, for example $d=\lfloor2r\rfloor$. We have that
\begin{equation*}
\frac{1}{2}<\frac{2r}{d+1}<1.
\end{equation*}
Since $t_d=\frac{d(d+1)}{2}$, we have that
\begin{equation*}
\frac{1}{2}<\frac{d}{t_d}r<1.
\end{equation*}
By Lemma \ref{lem3}, there exists $u\in\mathbb{N}$ such that $(u)_q=(q-1)^{(k)}0(q-1)^{(n)}$, $n\geq k+2$, and $k\geq 1$, and
\begin{equation*}
\frac{s_q(u^2)}{s_q(u)}=\frac{d}{t_d}r.
\end{equation*}
Take $v=(q^{(2^d)(m+1)}+q^{(2^{d-1})(m+1)}+...+q^{2(m+1)})u$ where $q^m>u$. Then we have
\begin{equation}\label{103}
v^2=\sum_{e_1=1}^{d}\sum_{e_2=1}^{d}q^{(2^{e_1}+2^{e_2})(m+1)}u^2
\end{equation}
Suppose for some $1\leq e_1,e_2,e_3,e_4\leq d$ we have:
\begin{equation*}
2^{e_1}+2^{e_2}=2^{e_3}+2^{e_4}.
\end{equation*}
Without loss of generality, assume $e_1\geq e_3$ and $e_4\geq e_2$. So we have
\begin{equation*}
2^{e_3}(2^{e_1-e_3}-1)=2^{e_2}(2^{e_4-e_2}-1).
\end{equation*}
Thus $e_3=e_2$ and $e_1=e_4$ and so any two terms in~\eqref{103} that don't have this condition are separated by a factor of at least
\begin{equation*}
q^{2(m+1)}>q^{2m+1}>2u^2.
\end{equation*}
Using our derived fact that $s_q(u^2)=s_q(2u^2)$ from Lemma \ref{lem3}, we have that
\begin{equation*}
s_q(v^2)=t_ds_q(u^2).
\end{equation*}
Also $s_q(v)=ds_q(u)$ so that
\begin{equation*}
\frac{s_q(v^2)}{s_q(v)}=\frac{t_ds_q(u^2)}{ds_q(u)}=\frac{t_d}{d}\frac{d}{t_d}r=r.
\end{equation*}
\end{proof}
We now focus on the construction for $q=4$.

\begin{lemma}\label{lem10}
Let $(u)_4=13^{(k)}23^{(n)}$ with $n\geq k+3$ and $k\geq 1$. Then $s_4(u^2)=3n$ and $s_4(u)=3+3(k+n)$.
\end{lemma}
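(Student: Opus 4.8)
The plan is to mirror the computation in Lemma \ref{lem2}, carried out in base $4$. First I would read off $s_4(u)$ directly from the digit string $13^{(k)}23^{(n)}$: summing the digits gives $1 + 3k + 2 + 3n = 3 + 3(k+n)$, so that part needs no real argument. The substance of the lemma is the value of $s_4(u^2)$, and for this I would pass to a closed form for $u$.

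The string $13^{(k)}23^{(n)}$ has $k+n+2$ digits; starting from the all-threes number $4^{k+n+2}-1$ of that length and correcting the leading digit (down by $2$ at position $k+n+1$) and the digit just after the block of $k$ threes (down by $1$ at position $n$) gives
\begin{equation*}
u = (4^{k+n+2}-1) - 2\cdot 4^{k+n+1} - 4^n = 2\cdot 4^{k+n+1} - 4^n - 1.
\end{equation*}
Squaring and expanding termwise then yields
\begin{equation*}
u^2 = 4^{2k+2n+3} - 4^{k+2n+2} - 4^{k+n+2} + 4^{2n} + 2\cdot 4^n + 1.
\end{equation*}

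The key step is to turn this signed sum into an honest base-$4$ expansion and read off its digit sum. I would regroup it into three disjoint blocks,
\begin{equation*}
u^2 = 4^{k+2n+2}\bigl(4^{k+1}-1\bigr) + 4^{k+n+2}\bigl(4^{n-k-2}-1\bigr) + \bigl(2\cdot 4^n + 1\bigr),
\end{equation*}
where the first factor is a run $3^{(k+1)}$ of threes, the second is a run $3^{(n-k-2)}$ of threes, and the third contributes a single $2$ at position $n$ together with a single $1$ at position $0$. Here the hypotheses do the real work: $k\geq 1$ makes the first run a genuine nonempty string of threes, while $n\geq k+3$ forces $n-k-2\geq 1$ (so the second run is nonempty and well-formed) and simultaneously guarantees that the three blocks occupy disjoint ranges of digit positions. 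Concretely I would establish the strict ordering $2k+2n+3 > k+2n+2 > 2n > k+n+2 > n > 0$ of the relevant exponents, the decisive inequality being $2n > k+n+2$, which is exactly $n\geq k+3$; this separates the blocks so that no carries or borrows occur between them.

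With the blocks disjoint the digit sum is additive, giving
\begin{equation*}
s_4(u^2) = 3(k+1) + 3(n-k-2) + 2 + 1 = 3n,
\end{equation*}
as claimed. I expect the only delicate point to be the bookkeeping of the exponent inequalities in the regrouping step; once the ordering above is verified under $n\geq k+3$ and $k\geq 1$, the remainder is a direct digit count in the same spirit as the proof of Lemma \ref{lem2}.
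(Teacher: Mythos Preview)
Your proof is correct and follows essentially the same approach as the paper: derive $u=2\cdot 4^{k+n+1}-4^n-1$, expand $u^2$, regroup into the same three blocks $4^{k+2n+2}(4^{k+1}-1)+4^{k+n+2}(4^{n-k-2}-1)+(2\cdot 4^n+1)$, and read off the digit sum $3(k+1)+3(n-k-2)+2+1=3n$. The only difference is that you spell out the exponent ordering guaranteeing disjointness of the blocks, which the paper leaves implicit.
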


\begin{proof}
We have $u=2\cdot 4^{n+k+1}-4^n-1$. Thus we have
\begin{align}
u^2&=4^{2n+2k+3}-4^{2n+k+2}+4^{2n}-4^{n+k+2}+2\cdot 4^n+1\\
&=4^{2n+k+2}(4^{k+1}-1)+4^{n+k+2}(4^{n-k-2}-1)+2\cdot 4^n+1\label{13}
\end{align}
Thus, we have
\begin{equation*}
s_4(u^2)=3(k+1)+3(n-k-2)+2+1=3n.
\end{equation*}
\end{proof}

\begin{lemma}\label{lem11}
For every rational number $r$ with $1/2<r<1$, there exists $u\in\mathbb{N}$ such that
\begin{equation*}
\frac{s_4(u^2)}{s_4(u)}=r.
\end{equation*}
\end{lemma}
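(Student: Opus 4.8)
The plan is to exploit Lemma~\ref{lem10}, which asserts that for the construction $(u)_4 = 13^{(k)}23^{(n)}$ (subject to $n \geq k+3$ and $k \geq 1$) one has $s_4(u^2) = 3n$ and $s_4(u) = 3 + 3(k+n)$. The ratio therefore simplifies to
\begin{equation*}
\frac{s_4(u^2)}{s_4(u)} = \frac{3n}{3(1+k+n)} = \frac{n}{1+k+n}.
\end{equation*}
Thus the whole problem reduces to choosing admissible parameters $n$ and $k$ for which this single fraction equals the prescribed $r$, exactly in the spirit of Lemma~\ref{lem3}.

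To hit $r = \frac{a}{c}$ with $\gcd(a,c)=1$, I would introduce a scaling parameter $t \in \mathbb{N}$ and set $n := ta$ and $k := t(c-a) - 1$. Then $1 + k + n = tc$, so that
\begin{equation*}
\frac{n}{1+k+n} = \frac{ta}{tc} = \frac{a}{c} = r,
\end{equation*}
as required. It remains only to verify that for a suitable choice of $t$ the construction is admissible, i.e. that $k \geq 1$ and $n \geq k+3$.

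This is where the hypotheses $\tfrac12 < r < 1$ enter. From $r < 1$ we get $a < c$, hence $c - a \geq 1$; from $r > \tfrac12$ we get $2a > c$, hence $2a - c \geq 1$. Now $k = t(c-a) - 1$, so $k \geq 1$ is equivalent to $t(c-a) \geq 2$, while $n - k = t(2a - c) + 1$, so $n \geq k+3$ is equivalent to $t(2a-c) \geq 2$. Since $c-a$ and $2a-c$ are both positive integers, any $t \geq 2$ makes both inequalities hold, and then $u$ is a well-defined natural number with the desired ratio.

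I do not expect a serious obstacle here: the argument is a direct application of Lemma~\ref{lem10} combined with a linear parametrization of $(n,k)$. The only point demanding care is the simultaneous satisfaction of the two admissibility constraints $k \geq 1$ and $n \geq k+3$. It is precisely this that forces the use of both bounds on $r$ (the upper bound $r<1$ supplying $c-a \geq 1$ and the lower bound $r>\tfrac12$ supplying $2a-c \geq 1$) and the introduction of the factor $t$ to absorb the small edge cases such as $c-a = 1$ or $2a-c = 1$.
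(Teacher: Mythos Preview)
Your proof is correct and essentially identical to the paper's: the paper simply fixes $t=2$ at the outset by writing $r=\tfrac{2a}{2c}$ with $\gcd(a,c)=1$, then sets $n=2a$ and $k=2c-2a-1$, which is precisely your parametrization specialized to $t=2$. The verification of the admissibility constraints $k\geq 1$ and $n\geq k+3$ proceeds exactly as you describe.
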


\begin{proof}
Write $r=\frac{2a}{2c}$ where $\gcd(a,c)=1$ and $\frac{1}{2}<r<1$. Let $n:=2a>0$ and $k:=2c-2a-1>0$. Notice that $n\geq k+3$ because $2c<4a$ and so $2c+2\leq 4a$ so that $(2c-2a-1)+3\leq 2a$. By Lemma \ref{lem10}, we have for $(u)_4=13^{(k)}23^{(n)}$,
\begin{equation*}
\frac{s_4(u^2)}{s_4(u)}=\frac{6a}{3+3(2c-2a-1+2a)}=\frac{6a}{6c}=r.
\end{equation*}
\end{proof}

\begin{lemma}\label{lem12}
Let $u$ be as in Lemma \ref{lem10}. We have $s_4(2u^2)=3n$.
\end{lemma}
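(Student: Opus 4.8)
The plan is to mirror the computation of Lemma \ref{lem4}, starting from the factored expression for $u^2$ already obtained in \eqref{13} and simply doubling it. Writing
\begin{equation*}
2u^2=4^{2n+k+2}(2\cdot 4^{k+1}-2)+4^{n+k+2}(2\cdot 4^{n-k-2}-2)+4\cdot 4^n+2,
\end{equation*}
the first thing I would record is that $4\cdot 4^n=4^{n+1}$, so that the doubling of the digit $2$ sitting at position $n$ in $u^2$ forces a carry and instead contributes a single digit $1$ at position $n+1$. This is the one genuinely new feature compared with the $q\ge 5$ case of Lemma \ref{lem4}: in base $4$ the entry $2$ cannot simply be doubled in place.

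Next I would expand each of the two large blocks in base $4$. For any $j\ge 1$ one has $2\cdot 4^{j}-2=1\,3^{(j-1)}\,2$ in base $4$, a number whose digits sum to $3j$ (note that $2\cdot 3=12$ in base $4$, so internal carries occur and the sum does not simply double). Applying this with $j=k+1$, which is legitimate since $k\ge 1$, shows that the top block $4^{2n+k+2}(2\cdot 4^{k+1}-2)$ occupies positions $2n+k+2$ through $2n+2k+3$ and contributes digit sum $3(k+1)$; applying it with $j=n-k-2$, which is $\ge 1$ because $n\ge k+3$, shows that the middle block $4^{n+k+2}(2\cdot 4^{n-k-2}-2)$ occupies positions $n+k+2$ through $2n$ and contributes digit sum $3(n-k-2)$.

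I would then verify that the four pieces, namely the top block, the middle block, the lone carried digit $1$ at position $n+1$, and the digit $2$ at position $0$, occupy pairwise disjoint ranges of positions, so that no further carrying occurs and the total digit sum is just the sum of the individual contributions. The inequalities $k\ge 1$ and $n\ge k+3$ supply the gaps: the middle block ends at $2n$ while the top block begins at $2n+k+2>2n$, the carried digit at position $n+1$ lies strictly below the middle block's lowest position $n+k+2$, and the constant $2$ sits far below everything. Summing then gives
\begin{equation*}
s_4(2u^2)=3(k+1)+3(n-k-2)+1+2=3n.
\end{equation*}

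The only point requiring real care is the carry generated by $4\cdot 4^n$, together with the internal carries hidden inside the identity $2\cdot 4^{j}-2=1\,3^{(j-1)}\,2$. Once these are handled cleanly and the disjointness of the blocks is confirmed, the remaining arithmetic is routine. The fact that the answer $3n$ coincides with the value $s_4(u^2)$ obtained in Lemma \ref{lem10} is exactly what makes this lemma the base-$4$ analogue of Lemma \ref{lem4}, and it is precisely this coincidence $s_4(2u^2)=s_4(u^2)$ that will later allow the construction to be amplified to ratios $r\ge 1$.
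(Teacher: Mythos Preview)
Your proof is correct and follows essentially the same route as the paper's: double the expression \eqref{13}, track the carry $4\cdot 4^n=4^{n+1}$, read off the base-$4$ digits of each block, confirm the blocks occupy disjoint position ranges, and sum. The only cosmetic difference is that the paper splits off the leading $1$ of each block as a separate power of $4$, writing for instance $4^{2n+2k+3}+4^{2n+k+2}(4^{k+1}-2)$ where you keep it together as $4^{2n+k+2}(2\cdot 4^{k+1}-2)=4^{2n+k+2}\cdot(1\,3^{(k)}\,2)_4$; the arithmetic and the final tally $3(k+1)+3(n-k-2)+1+2=3n$ are identical.
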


\begin{proof}
We have from \eqref{13} that
\begin{equation*}
2u^2=4^{2n+2k+3}+4^{2n+k+2}(4^{k+1}-2)+4^{2n}+4^{n+k+2}(4^{n-k-2}-2)+4^{n+1}+2.
\end{equation*}
Thus $s_4(2u^2)=1+3(k+1)-1+1+3(n-k-2)-1+3=3n$.
\end{proof}

\begin{lemma}\label{lem13}
For every rational number $r$ with $1/2<r$, there exists $v\in\mathbb{N}$ such that
\begin{equation*}
\frac{s_4(v^2)}{s_4(v)}=r.
\end{equation*}
\end{lemma}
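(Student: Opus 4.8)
The plan is to reproduce the proof of Lemma~\ref{lem5} essentially verbatim, with the base fixed at $q=4$ and with Lemmas~\ref{lem3} and~\ref{lem4} replaced by their base-$4$ counterparts, Lemmas~\ref{lem11} and~\ref{lem12}. The reason this transfer works is that the extension argument of Lemma~\ref{lem5} never used the specific shape of $u$: it used only (a) the existence, for each $r$ in the base interval $(1/2,1)$, of a suitable $u$ whose ratio equals a prescribed target, and (b) the digit-sum coincidence $s_q(2u^2)=s_q(u^2)$. For $q=4$ these two inputs are supplied by Lemma~\ref{lem11} and Lemma~\ref{lem12} respectively, so the same bookkeeping should close the argument.

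Concretely, given $r>1/2$, I would set $d=\lfloor 2r\rfloor$ and write $t_d=1+2+\cdots+d=\frac{d(d+1)}{2}$, so that $\frac12<\frac{2r}{d+1}<1$ and hence $\frac12<\frac{d}{t_d}r<1$. Lemma~\ref{lem11} then yields $u$ with $(u)_4=13^{(k)}23^{(n)}$, $n\ge k+3$, $k\ge 1$, and $\frac{s_4(u^2)}{s_4(u)}=\frac{d}{t_d}r$. Choosing $m$ with $4^m>u$, I would set
\[
v=\left(4^{2^d(m+1)}+4^{2^{d-1}(m+1)}+\cdots+4^{2(m+1)}\right)u,
\]
so that
\[
v^2=\sum_{e_1=1}^{d}\sum_{e_2=1}^{d}4^{(2^{e_1}+2^{e_2})(m+1)}u^2.
\]
Uniqueness of binary representations forces $2^{e_1}+2^{e_2}=2^{e_3}+2^{e_4}$ only when $\{e_1,e_2\}=\{e_3,e_4\}$, so the double sum collapses into $d$ diagonal blocks carrying a single copy of $u^2$ and $\frac{d(d-1)}{2}$ off-diagonal blocks carrying $2u^2$.

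The numerical point to verify for $q=4$ is the separation bound: since $4^m>u$ we have $2u^2<2\cdot 4^{2m}<4^{2m+1}<4^{2(m+1)}$, while distinct exponents $(2^{e_1}+2^{e_2})(m+1)$ differ by at least $2(m+1)$, so the blocks never overlap digit-wise, and likewise $s_4(v)=d\,s_4(u)$. Summing digit contributions and invoking $s_4(2u^2)=s_4(u^2)$ from Lemma~\ref{lem12} gives
\[
s_4(v^2)=d\,s_4(u^2)+\tfrac{d(d-1)}{2}\,s_4(2u^2)=\left(d+\tfrac{d(d-1)}{2}\right)s_4(u^2)=t_d\,s_4(u^2),
\]
whence $\frac{s_4(v^2)}{s_4(v)}=\frac{t_d}{d}\cdot\frac{d}{t_d}r=r$. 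I expect no genuine obstacle here: the real work was front-loaded into engineering the base-$4$ construction of Lemmas~\ref{lem10}--\ref{lem12} so that the coincidence $s_4(2u^2)=s_4(u^2)$ holds, exactly as it did for $q\ge5$. The only things demanding care are the combinatorial count confirming that off-diagonal terms appear with coefficient $2$, the spacing check above, and the observation that the altered shape of $u$ (and hence the different formula for $s_4(u)$) has already been fully absorbed into Lemma~\ref{lem11}, leaving the extension step insensitive to it.
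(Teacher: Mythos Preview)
Your proposal is correct and follows exactly the paper's own approach: the paper's proof of Lemma~\ref{lem13} reads in its entirety ``Same as proof of Lemma~\ref{lem5} with $q=4$,'' and what you have written is precisely that argument spelled out, with Lemmas~\ref{lem11} and~\ref{lem12} substituted for Lemmas~\ref{lem3} and~\ref{lem4}. The separation check $2u^2<4^{2(m+1)}$ and the diagonal/off-diagonal bookkeeping you highlight are indeed the only points requiring attention, and you have handled them correctly.
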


\begin{proof}
Same as proof of Lemma \ref{lem5} with $q=4$.
\end{proof}
Finally, we focus on the construction for $q=3$.

\begin{lemma}\label{lem6}
Let $(u)_3=12^{(k)}12^{(n)}$ with $n\geq k+2$ and $k\geq 1$. Then $s_3(u^2)=2n+2$ and $s_3(u)=2+2(k+n)$.
\end{lemma}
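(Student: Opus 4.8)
The plan is to follow the base-$4$ template of Lemma \ref{lem10}, with one genuine complication caused by carries in base $3$. First I would record a closed form for $u$. Summing the two geometric runs of $2$'s in $(u)_3=12^{(k)}12^{(n)}$ gives
\begin{equation*}
u=2\cdot 3^{k+n+1}-3^{n}-1,
\end{equation*}
the exact analogue of the identity $u=2\cdot 4^{n+k+1}-4^{n}-1$ used for Lemma \ref{lem10}. The value $s_3(u)=2+2(k+n)$ is then immediate from reading off the digits of $(u)_3$: a single $1$, then $k$ twos, then a single $1$, then $n$ twos.

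Next I would square the closed form and expand into powers of $3$:
\begin{equation*}
u^{2}=4\cdot 3^{2k+2n+2}-4\cdot 3^{k+2n+1}-4\cdot 3^{k+n+1}+3^{2n}+2\cdot 3^{n}+1.
\end{equation*}
This is where base $3$ differs from base $4$. In Lemma \ref{lem10} the cross terms had the shape $-2\cdot 2\cdot 4^{\,\bullet}=-4^{\,\bullet+1}$, a single power, so the digits could be read off at once. In base $3$ the coefficient $4$ is not a digit, so I must first rewrite $4\cdot 3^{m}=3^{m+1}+3^{m}$ and resolve the resulting borrows before counting anything.

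The counting itself I would organise by splitting $u^2$ into three blocks that occupy disjoint ranges of digit positions:
\begin{equation*}
u^{2}=3^{k+2n+1}\bigl(3^{k+2}+3^{k+1}-4\bigr)+3^{k+n+1}\bigl(3^{n-k-1}-4\bigr)+\bigl(2\cdot 3^{n}+1\bigr).
\end{equation*}
Resolving borrows in each factor, $3^{k+2}+3^{k+1}-4$ has base-$3$ digits summing to $2k+2$, the middle factor $3^{n-k-1}-4$ has digit sum $2(n-k-1)-1=2n-2k-3$, and $2\cdot 3^{n}+1$ contributes $3$. The three shifted blocks lie in the disjoint position ranges $[0,n]$, $[k+n+1,2n-1]$, and $[k+2n+1,2k+2n+3]$, so their digit sums simply add, giving
\begin{equation*}
s_3(u^{2})=(2k+2)+(2n-2k-3)+3=2n+2.
\end{equation*}

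The main obstacle is this disjointness, and it is where the hypothesis must be scrutinised. The factorisation above requires the middle factor $3^{n-k-1}-4$ to be positive, i.e. $n-k-1\ge 2$, equivalently $n\ge k+3$. At the boundary $n=k+2$ the powers $3^{2n}$ and $3^{k+n+2}$ (the latter arising from $-4\cdot 3^{k+n+1}=-3^{k+n+2}-3^{k+n+1}$) coincide and cancel, leaving a lone $-3^{k+n+1}$ that must borrow up into the top block and inflate the digit sum; for example $k=1,n=3$ gives $s_3(u^{2})=12\ne 2n+2$. I would therefore run the argument under $n\ge k+3$, so the stated hypothesis $n\ge k+2$ appears to need strengthening to $n\ge k+3$, matching the requirement already imposed in Lemma \ref{lem10}.
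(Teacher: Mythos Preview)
Your approach is essentially the paper's own: the same closed form $u=2\cdot 3^{n+k+1}-3^{n}-1$, the same expansion of $u^2$, and a block decomposition that differs from the paper's displayed form
\[
u^{2}=3^{2n+2k+3}+3^{2n+k+1}(3^{k+1}-4)+3^{n+k+1}(3^{n-k-1}-4)+2\cdot 3^{n}+1
\]
only cosmetically (you have merged the first two pieces into $3^{k+2n+1}(3^{k+2}+3^{k+1}-4)$). The digit counts you obtain for each block match the paper's term-by-term tally $1+[2(k+1)-1]+[2(n-k-1)-1]+2+1=2n+2$.

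Your diagnosis of the hypothesis is correct, and this is a point the paper glosses over. Both your decomposition and the paper's displayed one contain the factor $3^{n-k-1}-4$, which is only nonnegative when $n\ge k+3$; at $n=k+2$ it equals $-1$ and the block-disjointness argument collapses. Your counterexample $k=1$, $n=3$ is valid: $u=458$, $u^{2}=209764$ has base-$3$ expansion $101122202001$ with $s_3(u^2)=12\neq 2n+2=8$. So the lemma as stated is false at the boundary $n=k+2$, and the paper's own proof tacitly assumes $n\ge k+3$ when it reads off the digit sum of $3^{n+k+1}(3^{n-k-1}-4)$ as $2(n-k-1)-1$. The hypothesis should indeed be strengthened to $n\ge k+3$, in line with Lemma~\ref{lem10}; note that this also affects the application in Lemma~\ref{lem7}, where the choice $n=3a-1$, $k=3c-3a$ can give exactly $n=k+2$ (e.g.\ $a=2$, $c=3$).
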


\begin{proof}
We have $u=2\cdot 3^{n+k+1}-3^n-1$. Thus we have
\begin{align}
u^2&=4\cdot 3^{2n+2k+2}-4\cdot 3^{2n+k+1}+3^{2n}-4\cdot 3^{n+k+1}+2\cdot 3^n+1\\
&=3^{2n+2k+3}+3^{2n+k+1}(3^{k+1}-4)+3^{n+k+1}(3^{n-k-1}-4)+2\cdot 3^n+1\label{12}
\end{align}
Thus, we have
\begin{align*}
s_3(u^2)&=1+2(k+1)-1+2(n-k-1)-1+2+1\\
&=2n+2.
\end{align*}
\end{proof}

\begin{lemma}\label{lem7}
For every rational number $r$ with $1/2<r<1$, there exists $u\in\mathbb{N}$ such that
\begin{equation*}
\frac{s_3(u^2)}{s_3(u)}=r.
\end{equation*}
\end{lemma}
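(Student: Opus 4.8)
The plan is to reuse verbatim the template that already worked for $q\geq 5$ in Lemma~\ref{lem3} and for $q=4$ in Lemma~\ref{lem11}, now fed with the digit-sum data of Lemma~\ref{lem6}. That lemma tells us that the construction $(u)_3=12^{(k)}12^{(n)}$ is admissible whenever $n\geq k+2$ and $k\geq 1$, and that in this case
\[
\frac{s_3(u^2)}{s_3(u)}=\frac{2n+2}{2+2(k+n)}=\frac{n+1}{n+k+1}.
\]
So the whole problem collapses to choosing $n$ and $k$, in terms of $r$, so that this fraction equals $r$ while the admissibility constraints hold. Since the denominator exceeds the numerator by exactly $k$, the natural move is to write $r$ with a common multiplier $\lambda$ and set $n+1=\lambda a$, $n+k+1=\lambda c$, which forces $n=\lambda a-1$ and $k=\lambda(c-a)$. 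Then the ratio is $\lambda a/(\lambda c)=r$ automatically, and $k\geq 1$ is immediate from $c>a$ (because $r<1$).

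The only place requiring any thought — and hence the main obstacle — is forcing the inequality $n\geq k+2$. With the above choice, $n-k=\lambda(2a-c)-1$, and the hypothesis $r>\tfrac12$ supplies only $2a-c\geq 1$. Consequently the smallest ``clean'' multiplier $\lambda=2$ (the one used for $q=4$) fails in the boundary case $2a-c=1$, for instance $r=3/5$, where it yields $n-k=1<2$. The fix is to build in extra slack by taking $\lambda=3$: writing $r=\tfrac{3a}{3c}$ with $\tfrac12<r<1$ and setting $n:=3a-1$, $k:=3c-3a$, one gets $n-k=3(2a-c)-1\geq 2$, so both hypotheses of Lemma~\ref{lem6} are satisfied.

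To finish, I would substitute these values to confirm the arithmetic: $s_3(u^2)=2n+2=6a$ and, using $k+n=3c-1$, $s_3(u)=2+2(k+n)=6c$, so the ratio is exactly $6a/(6c)=r$. I would note that coprimality of $a$ and $c$ plays no essential role in the construction itself; it is convenient only for making the boundary estimate $2a-c\geq 1$ transparent, which is precisely the inequality that dictated the choice of multiplier $\lambda=3$.
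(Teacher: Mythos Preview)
Your proposal is correct and is essentially the paper's own proof: you write $r=3a/(3c)$ with $a/c$ in lowest terms, set $n=3a-1$ and $k=3(c-a)$, verify $k\geq 1$ and $n\geq k+2$, and invoke Lemma~\ref{lem6} to get the ratio $6a/6c=r$. The only addition is your explanatory remark on why the multiplier $3$ (rather than $2$) is needed to clear the admissibility constraint in the boundary case $2a-c=1$, which the paper leaves implicit.
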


\begin{proof}
Write $r=\frac{3a}{3c}$ where $\gcd(a,c)=1$ and $\frac{1}{2}<r<1$. Let $n:=3a-1>0$ and $k:=3c-3a>0$. Notice $n\geq k+2$ because $3c<6a$ and so $3c+3\leq 6a$ so that $3(c-a)+2\leq 3a-1$. By Lemma \ref{lem6}, we have for $(u)_3=12^{(n)}12^{(k)}$,
\begin{equation*}
\frac{s_3(u^2)}{s_3(u)}=\frac{2(3a-1)+2}{2+2(3c-3a+3a-1)}=\frac{6a}{6c}=r.
\end{equation*}
\end{proof}

\begin{lemma}\label{lem8}
Let $u$ be as in Lemma \ref{lem6}. We have $s_3(2u^2)=2n+2$.
\end{lemma}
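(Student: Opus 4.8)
The plan is to follow the template of the proof of Lemma \ref{lem12} (the $q=4$ analogue): take the closed form \eqref{12} for $u^2$ established in Lemma \ref{lem6}, multiply it by $2$, and then read off the base-$3$ digits of the resulting number one block at a time. Doubling \eqref{12} gives
\begin{equation*}
2u^2 = 2\cdot 3^{2n+2k+3} + 3^{2n+k+1}(2\cdot 3^{k+1}-8) + 3^{n+k+1}(2\cdot 3^{n-k-1}-8) + 4\cdot 3^n + 2,
\end{equation*}
and the goal is to show that the digits of this expression sum to $2n+2$, which is the same value Lemma \ref{lem6} produces for $s_3(u^2)$.

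The key step is to expand each summand in base $3$ and to check that the resulting blocks occupy disjoint ranges of digit positions, so that their digit sums simply add. Concretely, $2\cdot 3^{2n+2k+3}$ is the single digit $2$; the coefficient $2\cdot 3^{k+1}-8$ has base-$3$ expansion $1\,2^{(k-1)}0\,1$ with digit sum $2k$; the coefficient $2\cdot 3^{n-k-1}-8$ expands as $1\,2^{(n-k-3)}0\,1$ with digit sum $2(n-k-1)-2$; the term $4\cdot 3^n=3^{n+1}+3^n$ contributes the two digits of $4=11$ in base $3$, with digit sum $2$; and the trailing $2$ contributes $2$. The hypotheses on $n$ and $k$ are exactly what keep these blocks from colliding (for instance, the top of the $4\cdot 3^n$ block lies strictly below the bottom of the $3^{n+k+1}$-block, and the subtractive coefficients stay nonnegative), so that the contributions add and
\begin{equation*}
s_3(2u^2) = 2 + 2k + \big(2(n-k-1)-2\big) + 2 + 2 = 2n+2.
\end{equation*}

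I expect the main obstacle to be the bookkeeping of carries in base $3$. Because the base is so small, doubling produces genuine carries that are absent in the $q\ge 5$ and $q=4$ cases: the factor $4$ in front of $3^n$ already carries (it is $11$ in base $3$), and each subtractive coefficient $2\cdot 3^{j}-8$ must be rewritten as an honest string of digits in $\{0,1,2\}$ before its digit sum can be counted. Most of the care therefore goes into verifying these local expansions and confirming block separation from the inequalities $k\ge 1$ and $n\ge k+2$. The payoff is that $s_3(2u^2)=s_3(u^2)$, which is precisely the invariance under doubling needed to run, in base $3$, the amplification argument of Lemma \ref{lem5} (carried out for $q=4$ in Lemma \ref{lem13}).
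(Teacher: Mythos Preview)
Your proof is correct and follows essentially the same approach as the paper: double the closed form \eqref{12}, group the result into non-overlapping base-$3$ blocks, and add up their digit sums. The paper's write-up regroups the top and middle blocks slightly differently (writing $7\cdot 3^{2n+2k+2}+3^{2n+k+1}(3^{k+1}-8)+3^{2n}+\cdots$ rather than your $2\cdot 3^{2n+2k+3}+3^{2n+k+1}(2\cdot 3^{k+1}-8)+\cdots$), but this is only a cosmetic difference---both decompositions encode the same string of base-$3$ digits and yield the same total $2n+2$.
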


\begin{proof}
We have from \eqref{12} that
\begin{equation*}
2u^2=7\cdot 3^{2n+2k+2}+3^{2n+k+1}(3^{k+1}-8)+3^{2n}+3^{n+k+1}(3^{n-k-1}-8)+4\cdot 3^n+2.
\end{equation*}
Thus $s_3(2u^2)=3+2(k+1)-3+1+2(n-k-1)-3+4=2n+2$.
\end{proof}

\begin{lemma}\label{lem9}
For every rational number $r$ with $1/2<r$, there exists $v\in\mathbb{N}$ such that
\begin{equation*}
\frac{s_3(v^2)}{s_3(v)}=r.
\end{equation*}
\end{lemma}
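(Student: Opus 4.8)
The plan is to follow the proof of Lemma \ref{lem5} essentially verbatim, now specialised to $q=3$, since the ingredients for the base-$3$ construction furnished by Lemmas \ref{lem6}, \ref{lem7}, and \ref{lem8} play precisely the roles that Lemmas \ref{lem2}, \ref{lem3}, and \ref{lem4} played there (this is also why the proof of Lemma \ref{lem13} could simply be cited as ``same as Lemma \ref{lem5} with $q=4$''). Writing $t_d=1+2+\cdots+d=\frac{d(d+1)}{2}$, I would first fix $d=\lfloor 2r\rfloor$, so that $\frac{d+1}{2}<2r<d+1$ and hence $\frac12<\frac{2r}{d+1}=\frac{d}{t_d}r<1$. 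By Lemma \ref{lem7} I can then produce $u\in\mathbb N$ with $(u)_3=12^{(k)}12^{(n)}$, $n\geq k+2$, $k\geq 1$, realising the ratio $\frac{s_3(u^2)}{s_3(u)}=\frac{d}{t_d}r$.

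Next I would take $v=\big(\sum_{j=1}^{d}3^{2^j(m+1)}\big)u$ with $m$ chosen so that $3^m>u$. This ensures the $d$ shifted copies of $u$ making up $v$ do not overlap, so $s_3(v)=d\,s_3(u)$. Squaring gives $v^2=\sum_{e_1=1}^{d}\sum_{e_2=1}^{d}3^{(2^{e_1}+2^{e_2})(m+1)}u^2$, and the same elementary argument as in Lemma \ref{lem5} shows that $2^{e_1}+2^{e_2}=2^{e_3}+2^{e_4}$ forces $\{e_1,e_2\}=\{e_3,e_4\}$. Distinct exponent values are therefore separated by a factor of at least $3^{2(m+1)}>3^{2m+1}>2u^2$ (here the inequality $2u^2<3^{2m+1}$ uses $3^m>u$ and $2<3$, which is exactly the point at which base $3$, unlike base $2$, succeeds), so no carries occur between blocks and the digit sum of $v^2$ is the sum of the digit sums of the separate blocks.

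The crux is the bookkeeping of these blocks combined with the identity $s_3(u^2)=s_3(2u^2)$ coming from Lemma \ref{lem8}. The $d$ diagonal pairs $e_1=e_2$ each contribute a single copy of $u^2$, while each of the $\binom{d}{2}$ off-diagonal values of $2^{e_1}+2^{e_2}$ is attained by exactly two ordered pairs and so contributes $2u^2$. Invoking Lemma \ref{lem8} to replace $s_3(2u^2)$ by $s_3(u^2)$, every block contributes the same digit sum, giving
\[
s_3(v^2)=\Big(d+\binom{d}{2}\Big)s_3(u^2)=t_d\,s_3(u^2).
\]
Together with $s_3(v)=d\,s_3(u)$ this yields $\frac{s_3(v^2)}{s_3(v)}=\frac{t_d}{d}\cdot\frac{d}{t_d}r=r$, as required.

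The only step requiring genuine care — and the one I expect to be the main obstacle — is justifying that $s_3(v^2)$ splits cleanly as a sum over blocks with no interference. This is exactly where the separation estimate $3^{2(m+1)}>2u^2$ and the equality $s_3(u^2)=s_3(2u^2)$ are indispensable: the former guarantees the absence of carries, and the latter ensures the doubled off-diagonal blocks carry the same digit weight as the single diagonal blocks. Everything else is the same formal manipulation already carried out in Lemma \ref{lem5}.
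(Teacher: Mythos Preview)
Your proposal is correct and is exactly the approach the paper takes: the paper's entire proof of Lemma~\ref{lem9} is the one line ``Same as proof of Lemma~\ref{lem5} with $q=3$,'' and you have faithfully unpacked that, invoking Lemmas~\ref{lem6}--\ref{lem8} in place of Lemmas~\ref{lem2}--\ref{lem4} and correctly noting that the separation estimate $3^{2(m+1)}>2u^2$ goes through because $2<3$.
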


\begin{proof}
Same as proof of Lemma \ref{lem5} with $q=3$.
\end{proof}

\begin{thm}\label{thm8}
For every rational number $r$ with $1/2<r$ and $q\geq 3$, there exists $v\in\mathbb{N}$ such that
\begin{equation*}
\frac{s_q(v^2)}{s_q(v)}=r.
\end{equation*}
\end{thm}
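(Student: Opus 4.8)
The plan is to obtain Theorem \ref{thm8} as a direct assembly of the three base‑specific lemmas already established, since between them they exhaust every base $q\geq 3$. I would split into three cases according to the value of $q$. For $q\geq 5$, Lemma \ref{lem5} produces the required $v$; for $q=4$, Lemma \ref{lem13} does the same; and for $q=3$, Lemma \ref{lem9} completes the picture. As each of these lemmas asserts precisely the existence of a $v\in\mathbb{N}$ with $s_q(v^2)/s_q(v)=r$ for every rational $r>1/2$, the union of the three cases is exactly the statement of the theorem, so no further computation is needed and the genuine difficulty has already been absorbed into the preceding lemmas.

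It is worth recording why these three cases cannot be merged, as this is where the real work sat. Each construction begins with an explicit two‑block $q$‑ary word — $(q-1)^{(k)}0(q-1)^{(n)}$ for $q\geq 5$, $13^{(k)}23^{(n)}$ for $q=4$, and $12^{(k)}12^{(n)}$ for $q=3$ — whose square has a digit pattern that can be read off term by term, yielding every ratio in the base range $1/2<r<1$. The reason the $q\geq 5$ word does not serve for $q=3,4$ is that the digit‑sum computations for $u^2$ (and for $2u^2$) hinge on the carries produced by coefficients such as $2q-4$ and $4q-4$; these behave differently, or fail to carry at all, once $q$ is small — for instance $2q-4=2<q$ when $q=3$ — so the digit sum changes and tailored words with their own bookkeeping are required. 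This is the only place where I expect an obstacle, and it has been handled separately in Lemmas \ref{lem2}, \ref{lem10}, and \ref{lem6}.

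The extension from $1/2<r<1$ to all $r>1/2$ is the common second ingredient, carried out identically in Lemmas \ref{lem5}, \ref{lem13}, and \ref{lem9}, and I would simply invoke it. Given $r>1/2$ one sets $d=\lfloor 2r\rfloor$ so that $\frac{1}{2}<\frac{d}{t_d}r<1$ with $t_d=\frac{d(d+1)}{2}$, produces $u$ realizing the ratio $\frac{d}{t_d}r$ in the base range, and forms $v=\bigl(\sum_{e=1}^{d}q^{2^e(m+1)}\bigr)u$ with $m$ chosen so that $q^m>u$. Squaring spreads $v^2$ into copies of $u^2$ indexed by pairs $(e_1,e_2)$; the exponents $2^{e_1}+2^{e_2}$ coincide only when the index sets agree, so the $d$ diagonal pairs contribute single blocks $u^2$ while the $\frac{d(d-1)}{2}$ off‑diagonal pairs each contribute a doubled block $2u^2$. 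The identity $s_q(u^2)=s_q(2u^2)$ — the point of Lemmas \ref{lem4}, \ref{lem12}, and \ref{lem8} — makes every block contribute the same digit sum, so $s_q(v^2)=\left(d+\frac{d(d-1)}{2}\right)s_q(u^2)=t_d\,s_q(u^2)$ while $s_q(v)=d\,s_q(u)$, giving the ratio $\frac{t_d}{d}\cdot\frac{d}{t_d}r=r$. With all of this in place, the only task remaining for Theorem \ref{thm8} is the trivial case distinction, and there is no further obstacle.
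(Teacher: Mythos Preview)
Your proposal is correct and takes exactly the same approach as the paper: the paper's proof of Theorem~\ref{thm8} is the one-line ``Combination of Lemmas~\ref{lem5}, \ref{lem13}, and~\ref{lem9}'', which is precisely your case split $q\geq 5$, $q=4$, $q=3$. Your additional recapitulation of the $t_d/d$ amplification step accurately reproduces the argument of Lemma~\ref{lem5} (and hence of Lemmas~\ref{lem13} and~\ref{lem9}); the only quibble is that your illustrative remark about why the $q\geq 5$ word fails for small $q$ points to the wrong coefficient---the real obstruction is that the single digit~$4$ and the coefficient $4q-4$ in the expansion of $2u^2$ produce extra carries once $q\leq 4$---but this is commentary, not part of the proof, and does not affect its validity.
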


\begin{proof}
Combination of Lemmas \ref{lem5}, \ref{lem13}, and \ref{lem9}.
\end{proof}

\section{Fractional Exponents}\label{sec4}
In studying the $n^2$ case, a natural question to ask is about other powers of $n$. Here we study where $n$ is raised to a fractional exponent. Since such a number is irrational in most cases, we take the floor of the function in order to make sense of summing the digits of the resulting number. We have the following:
\begin{thm}
For every rational number $r>0$, $q\geq 2$, $h\geq 1$, and $m\geq 3$ with $\gcd(h,m)=1$ and $\frac{h}{m}<\frac{1}{2}$, there exists a $u\in\mathbb{N}$ such that
\begin{equation*}
\frac{s_q(\lfloor u^{h/m}\rfloor)}{s_q(u)}=r.
\end{equation*}
\end{thm}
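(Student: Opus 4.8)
The plan is to reduce the fractional-exponent statement to the integer-square results already established in Sections \ref{sec2} and \ref{sec3}. The key observation is that if we can force $\lfloor u^{h/m}\rfloor$ to equal some controlled number $w$ whose digit sum we understand, and simultaneously control $s_q(u)$, then we can engineer the ratio. The cleanest way to achieve this is to take $u$ to be a perfect power: set $u = w^{m/\gcd(h,m)}\cdot(\text{something})$ so that $u^{h/m}$ is itself close to an integer. Since $\gcd(h,m)=1$, choosing $u = N^m$ for a suitable $N$ gives $u^{h/m} = N^h$ exactly, an integer, so that $\lfloor u^{h/m}\rfloor = N^h$ with no floor error at all. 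Then the ratio becomes
\begin{equation*}
\frac{s_q(\lfloor u^{h/m}\rfloor)}{s_q(u)} = \frac{s_q(N^h)}{s_q(N^m)}.
\end{equation*}

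First I would handle the case $h=1$, which is the essential one given the constraint $\frac{h}{m}<\frac12$ (forcing $m\geq 3$). With $h=1$ we need $\frac{s_q(N)}{s_q(N^m)} = r$, i.e. we must realize an arbitrary positive rational as a ratio of the digit sum of $N$ to the digit sum of a \emph{higher} power $N^m$. For $m=2$ this is exactly Theorems \ref{thm4} and \ref{thm8} (taking reciprocals of the ratio $s_q(N^2)/s_q(N)$, which those theorems show hits every positive rational). For general $m\geq 3$ I would either invoke the same constructions adapted to $m$th powers, or better, reduce to the square case: if I can find $N$ with prescribed $s_q(N)/s_q(N^2)$, I can iterate a padding construction analogous to Lemma \ref{lem15} to pass from squares to $m$th powers while keeping the ratio rational and controllable. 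The general-$h$ case then follows by writing $\frac{s_q(N^h)}{s_q(N^m)}$ and noting that both $N^h$ and $N^m$ are powers of the \emph{same} base $N$, so the spread-out construction (placing well-separated blocks of the form used in Lemma \ref{lem2} and Lemma \ref{lem5}) lets me compute both digit sums simultaneously in terms of the same parameters $k,n$.

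The concrete mechanism I would use is the block construction from the earlier sections: take $(N)_q$ to be a string like $(q-1)^{(k)}0(q-1)^{(n)}$ (or the base-2 analogue), for which both $s_q(N)$ and the digit sum of the relevant power are linear functions of $k$ and $n$ with explicitly computable constant terms, exactly as in Lemma \ref{lem2} and Theorem \ref{thm1}. The point is that raising such a sparse, structured number to the $m$th power produces a predictable pattern of non-overlapping digit blocks (provided $k,n$ are large and the gaps exceed the carry range), so $s_q(N^m)$ is again an affine function of $k,n$. Writing $r=a/c$, I would then solve the resulting linear Diophantine system for $k$ and $n$ in terms of a free parameter $t$, using the Chinese Remainder Theorem to guarantee integrality and the lower bounds $n>(m+1)k+d$, $k>2m(m+1)-1$ to guarantee the no-carry regime, precisely mirroring the argument in Theorem \ref{thm2}.

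\textbf{The main obstacle} I anticipate is verifying that raising the structured number $N$ to the $m$th power (rather than just squaring it) still yields a clean, overlap-free digit expansion with an affine digit-sum formula. For squares this was the content of the delicate carry analysis in Theorem \ref{thm1} and Lemmas \ref{lem2}, \ref{lem6}, \ref{lem10}; for a general $m$th power the cross-terms in the multinomial expansion proliferate, and ensuring that no two terms $q^{c_i}$ collide or carry into one another requires the separation between successive blocks to dominate $m$-fold products rather than pairwise ones. I would manage this by choosing $N$ to be \emph{extremely} sparse — a single long block $(q-1)^{(n)}$ padded with widely spaced isolated digits — so that $N$ is essentially $q^{A}-q^{B}+\cdots$ with exponent gaps growing fast enough that the $m$th power's $\binom{m+1}{\cdots}$ multinomial terms land in disjoint digit ranges. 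Establishing this separation bound rigorously, and checking that the resulting constant term $e_1,e_2$ depend only on $m,q$ (not on $k,n$), is where the real work lies; once that affine structure is secured, the Diophantine solving and the appeal to the Chinese Remainder Theorem proceed exactly as before.
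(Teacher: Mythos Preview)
Your reduction $u=N^{m}$, giving $\lfloor u^{h/m}\rfloor=N^{h}$ exactly, is correct and elegant, but it leaves you needing to realise every positive rational as $\dfrac{s_q(N^{h})}{s_q(N^{m})}$ with $m\geq 3$. This is precisely the higher-power analogue of the $n^{2}$ results, and the paper explicitly flags it as \emph{open} in the Conclusions: ``The techniques employed for the $n^{2}$ case would involve too many terms to be worked out by our use of the binomial theorem for higher powers so a different technique might have to be used for these cases.'' Your two suggested routes do not close this gap. The ``padding'' idea modelled on Lemma~\ref{lem15} multiplies the ratio $s_q(v^{2})/s_q(v)$ by a fixed factor; it does not convert control over squares into control over $m$th powers, since computing $s_q(v^{m})$ for $v=(q^{2w+1}+1)u$ already requires knowing $s_q(u^{m})$. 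The ``extremely sparse $N$'' idea is the right instinct, but the multinomial cross-terms for $m\geq 3$ are exactly what defeats the carry analysis of Theorem~\ref{thm1}, and you have not supplied the missing argument.

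The paper's proof avoids high powers entirely by working from the other side. It first \emph{chooses the target} $\lfloor u^{h/m}\rfloor=q^{n}+d$ with $d$ engineered so that $s_q(q^{n}+d)=(q-1)wa$, and then uses the fractional binomial expansion to locate the interval $\bigl[(q^{n}+d)^{m/h},(q^{n}+d+1)^{m/h}\bigr)$. The crucial point, and the reason the hypothesis $h/m<1/2$ matters, is that this interval has length of order $q^{n(m/h-1)}$ with $m/h>2$: after fixing the top two ``binomial'' digits of $u$ as $q^{nm/h}+\frac{m(d+1)}{h}q^{nm/h-n}$, there are still roughly $n(m/h-2)\to\infty$ free low-order digits, and the paper fills these with an integer $e$ chosen so that $s_q(u)=(q-1)wc$. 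No digit-sum formula for any power higher than $1$ is ever needed. Your approach trades an easy floor computation for a hard power-digit-sum problem; the paper does the opposite.
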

\begin{proof}
Our proof involves using the binomial theorem, this time with fractional exponents. We construct a number $u$ such that we have
\begin{equation}\label{powers}
(q^n+d)^{m/h}<u<(q^n+d+1)^{m/h}
\end{equation}
where $q^n>d$ and $h/m$ is our exponent. Thus we can see that $\lfloor u^{h/m}\rfloor=q^n+d$. We study the terms in the possibly infinite binomial expansion of each of the powers of $m/h$ in \eqref{powers} to get a range of values for $u$ that with a range of values for $d$ gives us any rational number. We begin as follows.
\newline
\newline
Let $h=h_1h_2$ so that $\gcd(h_2,q)=1$ and for all $h'|h$ with $h'>h_2$, we have that $\gcd(h',q)>1$. By Fermat's Little Theorem there exists $j\in\mathbb{N}$ such that $h_2|(q^j-1)$. Pick $i\in\mathbb{N}$ such that $h|m(q^j-1)q^i$. Pick $a,c\in\mathbb{N}$ such that
\begin{equation*}
r=\frac{a}{c}.
\end{equation*}
Pick a $w\in\mathbb{N}$ such that
\begin{equation*}
1+s_q\left(\frac{m(q^j-1)q^i}{h}\right)\leq(q-1)wc
\end{equation*}
and
\begin{equation*}
j+i\leq wa.
\end{equation*}
Also, let $d=(q^j-1)q^{wa-j}-1$ and choose $l\in\mathbb{N}$ such that $d<q^l$. Let $t_1=\sum_{k=\lfloor m/h\rfloor+1}^{\infty}{m/h\choose k}q^{l\left(\frac{m}{h}-k\right)}d^k$. $t_1$ is finite since $d<q^l$. Choose $e\in\mathbb{N}$ such that $1+s_q\left(\frac{m(q^j-1)q^i}{h}\right)+s_q(e)=(q-1)wc$ and that $t_1<e$. Finally, let $t_2=\max\{{m/h\choose k}:2\leq k\leq\lfloor m/h\rfloor\}$ and choose $n\in\mathbb{N}$ so that
\begin{equation*}
n>l,
\end{equation*}
\begin{equation*}
t_2d^{\lfloor m/h\rfloor}\leq q^n,
\end{equation*}
\begin{equation*}
e<q^{n(\frac{m}{h}-2)},
\end{equation*}
\begin{equation*}
\frac{m(d+1)}{h}<q^n,
\end{equation*}
and
\begin{equation*}
\frac{nm}{h}\in\mathbb{N}.
\end{equation*}
We construct $u$ by taking into account the first two terms of the binomial expansion of $(q^n+(d+1))^{m/h}$, which with the above considerations leads to our desired ratio. Let $u=q^{\frac{nm}{h}}+\frac{m(d+1)}{h}q^{\frac{nm}{h}-n}+e$. We have the following:
\begin{align*}
s_q(u)&=1+s_q\left(\frac{m(d+1)}{h}\right)+s_q(e)\\
&=1+s_q\left(\frac{m(q^j-1)q^i}{h}\right)+s_q(e)\\
&=(q-1)wc.
\end{align*}
By the binomial theorem, we have the following:
\begin{align*}
(q^n+d+1)^{\frac{m}{h}}&>q^{\frac{nm}{h}}+\frac{m(d+1)}{h}q^{n\left(\frac{m}{h}-1\right)}+q^{n(\frac{m}{h}-2)}\\
&>q^{\frac{nm}{h}}+\frac{m(d+1)}{h}q^{n\left(\frac{m}{h}-1\right)}+e=u
\end{align*}
where we have used the fact that
\begin{equation*}
q^{n(\frac{m}{h}-2)}<\sum_{k=2}^{\infty}{m/h\choose k}q^{n(\frac{m}{h}-k)}(d+1)^k
\end{equation*}
obtained by noting that
\begin{equation*}
q^{n(\frac{m}{h}-2)}<{m/h\choose 2}q^{n(\frac{m}{h}-2)}(d+1)^2,
\end{equation*}
that
\begin{equation*}
{m/h\choose 3}q^{n(\frac{m}{h}-3)}(d+1)^3>0,
\end{equation*}
and that each successive term is less than its predecessor.
Also, by the binomial theorem, we have the following:
\begin{align*}
(q^n+d)^{\frac{m}{h}}&=q^{\frac{nm}{h}}+\frac{md}{h}q^{\frac{nm}{h}-n}+\sum_{k=2}^{\infty}{m/h\choose k}q^{n\left(\frac{m}{h}-k\right)}d^k\\
&=q^{\frac{nm}{h}}+\frac{md}{h}q^{\frac{nm}{h}-n}+\sum_{k=2}^{\lfloor m/h\rfloor}{m/h\choose k}q^{n\left(\frac{m}{h}-k\right)}d^k+\sum_{k=\lfloor m/h\rfloor+1}^{\infty}{m/h\choose k}q^{n\left(\frac{m}{h}-k\right)}d^k\\
&<q^{\frac{nm}{h}}+\frac{md}{h}q^{\frac{nm}{h}-n}+t_2\left\lfloor\frac{m}{h}\right\rfloor q^{n(\frac{m}{h})-2n}d^{\lfloor\frac{m}{h}\rfloor}+t_1\\
&\leq q^{\frac{nm}{h}}+\frac{md}{h}q^{\frac{nm}{h}-n}+\left(\frac{m}{h}\right)q^{n(\frac{m}{h})-n}+e\\
&=q^{\frac{nm}{h}}+\frac{m(d+1)}{h}q^{\frac{nm}{h}-n}+e=u.
\end{align*}
Thus
\begin{equation*}
\lfloor u^{h/m}\rfloor=q^n+d.
\end{equation*}
So we have that
\begin{align*}
s_q(\lfloor u^{h/m}\rfloor)&=s_q(q^n+d)\\
&=1+s_q(d)\\
&=1+(q-2)+(q-1)(j-1+wa-j)\\
&=(q-1)wa
\end{align*}
where we have used the expansion $(d)_q=(q-1)^{(j)}(q-2)(q-1)^{(wa-j-1)}$. Thus we have
\begin{equation*}
\frac{s_q(\lfloor u^{h/m}\rfloor)}{s_q(u)}=\frac{(q-1)wa}{(q-1)wc}=r.
\end{equation*}
\end{proof}
It is worth pointing out that the above method of proof certainly cannot work for when the exponent $m/h>1$. It also cannot work even when $m/h\geq 1/2$ since we used the fact that
\begin{equation*}
\lim_{n\rightarrow\infty}q^{n(\frac{m}{h}-2)}=\infty
\end{equation*}
to allow our choice of $e$ to be arbitarily large. The specific case for when the exponent is $1/2$, however, can be included since this case is simply the inverse of our work on $n^2$ proved earlier. We now give our two results for the infimum and supremum of this ratio when the exponent $\alpha$ is irrational.
\begin{thm}
Let $\alpha\in\mathbb{R}$ be a positive irrational number. Then we have
\begin{equation*}
\lim\sup_{n\rightarrow\infty}\frac{s_q(\lfloor n^{\alpha}\rfloor)}{s_q(n)}=\infty.
\end{equation*}
\end{thm}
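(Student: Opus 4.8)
The plan is to exhibit one simple sequence of inputs along which the ratio blows up, namely the pure powers $n=q^k$. Since $s_q(q^k)=1$, it suffices to prove that $\limsup_{k\to\infty}s_q(\lfloor q^{k\alpha}\rfloor)=\infty$. The engine of the argument is an exact description of these digit sums in terms of the fractional parts $\{k\alpha\}$. Writing $k\alpha=\lfloor k\alpha\rfloor+\{k\alpha\}$ and setting $y_k:=q^{\{k\alpha\}}\in(1,q)$, one has
\begin{equation*}
q^{k\alpha}=q^{\lfloor k\alpha\rfloor}\,y_k=\sum_{i=0}^{\infty}D_i(k)\,q^{\lfloor k\alpha\rfloor-i},
\end{equation*}
where $D_0(k)D_1(k)D_2(k)\cdots$ is the base-$q$ expansion of $y_k$ (with $D_0(k)\in\{1,\dots,q-1\}$). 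Keeping only the digits in nonnegative positions shows $\lfloor q^{k\alpha}\rfloor=\sum_{i=0}^{\lfloor k\alpha\rfloor}D_i(k)q^{\lfloor k\alpha\rfloor-i}$, so that
\begin{equation*}
s_q(\lfloor q^{k\alpha}\rfloor)=\sum_{i=0}^{\lfloor k\alpha\rfloor}D_i(k).
\end{equation*}
In words, the digit sum we care about is exactly the sum of the first $\lfloor k\alpha\rfloor+1$ base-$q$ digits of the real number $y_k$.

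Next I would argue by contradiction. Suppose $\limsup_k s_q(\lfloor q^{k\alpha}\rfloor)\le M$ for some constant $M$; then for all large $k$ the number $y_k$ has at most $M$ (counted with their values) nonzero digits among its first $\lfloor k\alpha\rfloor+1$ base-$q$ digits. For each $L$ define the \emph{sparse} set
\begin{equation*}
C_L:=\{\,t\in[0,1):\text{the first }L+1\text{ base-}q\text{ digits of }q^t\text{ sum to at most }M\,\}.
\end{equation*}
Each $C_L$ is a finite union of intervals, the family is nested decreasing in $L$, and a crude count of admissible digit strings gives $\mu(C_L)\le \binom{L+1}{M}(q-1)^M q^{-L}/\ln q\to 0$ as $L\to\infty$, since $t\mapsto q^t$ is bi-Lipschitz on $[0,1]$. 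Fix $L_0$ large enough that $\mu(C_{L_0})<\tfrac12$. For every large $k$ the contradiction hypothesis says $\{k\alpha\}\in C_{\lfloor k\alpha\rfloor}\subseteq C_{L_0}$ (using $\lfloor k\alpha\rfloor\ge L_0$ and nestedness), so the asymptotic frequency of $k$ with $\{k\alpha\}\in C_{L_0}$ would equal $1$. But $\alpha$ is irrational, so by Weyl's equidistribution theorem this frequency equals $\mu(C_{L_0})<\tfrac12$, a contradiction. Hence $\limsup_k s_q(\lfloor q^{k\alpha}\rfloor)=\infty$, and passing to the full sequence of $n$ gives the theorem.

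The steps needing the most care are the digit-sum identity and the measure bound. For the identity I must check that truncating $q^{k\alpha}$ to its nonnegative digit positions really yields $\lfloor q^{k\alpha}\rfloor$; this is immediate once one notes $\{k\alpha\}\in(0,1)$ forces $y_k\in(1,q)$, so $D_0(k)\ge 1$ and the tail $\sum_{i>\lfloor k\alpha\rfloor}D_i(k)q^{\lfloor k\alpha\rfloor-i}$ is a genuine fractional part lying in $[0,1)$. I expect the main obstacle to be handling the finitely many degenerate $k$ where $y_k$ has an ambiguous (terminating) expansion; these are harmless because the equidistribution argument needs only the frequency statement, which tolerates a density-zero exceptional set, and because $C_{L_0}$ has boundary of measure zero and is therefore a continuity set for the equidistributed sequence $\{k\alpha\}$. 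Finally I would remark that this argument uses $\alpha\notin\mathbb{Q}$ only through equidistribution of $\{k\alpha\}$, so nothing about the size or Diophantine type of $\alpha$ is required.
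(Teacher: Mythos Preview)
Your argument is correct (modulo the crude combinatorial bound on the number of digit strings with sum at most $M$, which is slightly misstated but clearly $O(L^M)$, so $\mu(C_L)\to 0$ still follows). However, the paper reaches the same conclusion by a much shorter and more elementary route. Both proofs start with the same key reduction, taking $n=q^k$ so that the denominator is $1$, and both ultimately hinge on the behaviour of the fractional parts $\{k\alpha\}$. The difference is that you invoke Weyl equidistribution and argue by contradiction through a measure estimate on the ``sparse'' set $C_{L_0}$, whereas the paper needs only Kronecker density and a direct observation: given any target $m$, pick $j$ with $(q-1)j>m$ and $\epsilon>0$ with $q^{-\epsilon}>1-q^{-j}$; by density one finds $k$ with $\{k\alpha\}>1-\epsilon$, and then $q^{\lceil k\alpha\rceil}>q^{k\alpha}>q^{\lceil k\alpha\rceil}-q^{\lceil k\alpha\rceil-j}$, which forces the top $j$ base-$q$ digits of $\lfloor q^{k\alpha}\rfloor$ to all equal $q-1$, giving $s_q(\lfloor q^{k\alpha}\rfloor)\ge(q-1)j>m$. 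So the paper's proof is essentially the special case of your framework where one observes directly that for $t$ close to $1$ the leading block of digits of $q^t$ is a long run of $(q-1)$'s, bypassing the need for any measure or equidistribution machinery. Your approach, while heavier, has the advantage that the same template would yield finer distributional statements (e.g., that the digit sums are large for a density-one set of $k$), which the paper's density argument does not.
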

\begin{proof}
We prove our result by considering $n=q^k$ for $k\in\mathbb{N}$ and showing that $s_q(\lfloor q^{k\alpha}\rfloor)$ can get arbitarily large. Let $m\in\mathbb{N}$. Choose $j\in\mathbb{N}$ such that $m<(q-1)j$ and choose $\epsilon>0$ such that $1-\frac{1}{q^j}<q^{-\epsilon}<1$. Also, choose $k\in\mathbb{N}$ such that $k\alpha>j$ and that $k\alpha>\lceil k\alpha\rceil-\epsilon$. Such a $k$ is possible because $\{\{k\alpha\}:k\in\mathbb{N}\}$ (where $\{k\alpha\}=k\alpha-\lfloor k\alpha\rfloor$) is a dense subset of $(0,1)$. Thus, we have the following:
\begin{align*}
q^{\lceil k\alpha\rceil}>q^{k\alpha}&>q^{\lceil k\alpha\rceil-\epsilon}\\
&>q^{\lceil k\alpha\rceil}\left(1-\frac{1}{q^j}\right)\\
&>q^{\lceil k\alpha\rceil}-q^{\lceil k\alpha\rceil-j}\\
&=q^{\lceil k\alpha\rceil-j}(q^j-1).
\end{align*}
Letting $u=\lfloor q^{k\alpha}\rfloor$, we thus we have the following:
\begin{align*}
s_q(u)>s_q(q^{\lceil k\alpha\rceil-j}(q^j-1))=s_q(q^j-1)=(q-1)j>m.
\end{align*}
Since $s_q(q^k)=1$ and $m\in\mathbb{N}$ was arbitrary, we have our result.
\end{proof}
\begin{thm}
Let $0<\alpha<1$ be irrational. Then we have
\begin{equation*}
\lim\inf_{n\rightarrow\infty}\frac{s_q(\lfloor n^{\alpha}\rfloor)}{s_q(n)}=0.
\end{equation*}
\end{thm}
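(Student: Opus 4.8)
The plan is to dualize the previous argument: instead of forcing the numerator to be large, I would make it as small as possible—equal to $1$—while letting the denominator grow without bound. For each large $j\in\mathbb{N}$, consider the integers $n$ with $\lfloor n^{\alpha}\rfloor=q^{j}$; these are exactly the integers in the interval
\[
I_j=\bigl[\,q^{j/\alpha},\ (q^{j}+1)^{1/\alpha}\,\bigr),
\]
and every such $n$ satisfies $s_q(\lfloor n^{\alpha}\rfloor)=s_q(q^{j})=1$. So it suffices to locate, for infinitely many $j$, some $n\in I_j$ whose own digit sum $s_q(n)$ is large.

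First I would show that $I_j$ is long. Since $0<\alpha<1$ we have $1/\alpha-1>0$, so by the mean value theorem applied to $x\mapsto x^{1/\alpha}$ there is some $\xi\in(q^{j},q^{j}+1)$ with
\[
(q^{j}+1)^{1/\alpha}-q^{j/\alpha}=\tfrac{1}{\alpha}\,\xi^{\,1/\alpha-1}>\tfrac{1}{\alpha}\,q^{\,j(1/\alpha-1)},
\]
and this lower bound tends to infinity with $j$. Because the length of $I_j$ grows, for all large $j$ I can fit an aligned block $[cq^{M},(c+1)q^{M})$ inside $I_j$, where $M=\lfloor j(1/\alpha-1)\rfloor-C$ for a constant $C=C(\alpha,q)$ chosen so that $2q^{M}$ stays below the length bound above; concretely one takes $c=\lceil q^{j/\alpha}/q^{M}\rceil$, so that $cq^{M}\ge q^{j/\alpha}$ and $(c+1)q^{M}<q^{j/\alpha}+2q^{M}<(q^{j}+1)^{1/\alpha}$. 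Setting $n=(c+1)q^{M}-1$ then gives an integer in $I_j$ whose last $M$ base-$q$ digits are all $q-1$, so $s_q(n)\ge (q-1)M$. Since $n\ge q^{j/\alpha}\to\infty$, these $n$ form a sequence along which
\[
\frac{s_q(\lfloor n^{\alpha}\rfloor)}{s_q(n)}\le\frac{1}{(q-1)M}\longrightarrow 0,
\]
which yields the claimed equality of the infimum with $0$.

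I expect the only real work to be the block-fitting bookkeeping in the third step: verifying that the aligned interval $[cq^{M},(c+1)q^{M})$ genuinely sits inside $I_j$ once $M$ is pinned down, and that $C$ can be chosen uniformly in $j$ (here the sign of $-\log_q\alpha$ actually helps, since $\alpha<1$). It is worth noting that this construction never invokes the irrationality of $\alpha$—only $0<\alpha<1$ is used to guarantee $1/\alpha-1>0$—so, unlike the supremum result, the statement would hold verbatim for rational $\alpha$ in this range as well.
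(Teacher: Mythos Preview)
Your proposal is correct and follows essentially the same strategy as the paper's proof: for each large $j$ (the paper writes $k$), locate an integer $n$ in $[q^{j/\alpha},(q^{j}+1)^{1/\alpha})$ whose trailing base-$q$ digits are all $q-1$, exploiting that this interval has length at least $\tfrac{1}{\alpha}q^{j(1/\alpha-1)}\to\infty$; the paper chooses a rational $r\in(1,1/\alpha)$ and works modulo $q^{k(r-1)}$ where you take $M=\lfloor j(1/\alpha-1)\rfloor-C$ directly, but the resulting $n$ and the estimate $s_q(n)\ge(q-1)M$ are the same. Your remark that irrationality of $\alpha$ is never invoked is also accurate---the paper's argument does not use it either.
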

\begin{proof}
Let $0<\alpha<1$ be irrational. We have that for all $(q^k)^{1/\alpha}<u<(q^k+1)^{1/\alpha}$, $\lfloor u^{\alpha}\rfloor=q^k$. We will show that you can pick such a $u$ such that $s_q(u)$ is arbitrarily large, assuming $k$ is large enough. Let $m\in\mathbb{N}$. Pick $1<r<1/\alpha$ with $r\in\mathbb{Q}$ and pick $k\in\mathbb{N}$ such that $(q-1)k(r-1)>m$. Suppose $0\leq v<q^{kr-k}$, $v\in\mathbb{N}$. We have the following:
\begin{align*}
q^{k/\alpha}<\lceil q^{k/\alpha}\rceil+v&<q^{k/\alpha}+q^{kr-k}\\
&<q^{k/\alpha}+q^{k(\frac{1}{\alpha}-1)}\\
&<q^{k/\alpha}+\frac{1}{\alpha}q^{k(\frac{1}{\alpha}-1)}\\
&<(q^k+1)^{1/\alpha}.
\end{align*}
Let $\lceil q^{k/\alpha}\rceil\equiv c\mod q^{kr-k}$ where $0\leq c<q^{kr-k}$. Let $v=q^{kr-k}-1-c$. Then we have $\lceil q^{k/\alpha}\rceil+v\equiv c-1-c\equiv -1\mod q^{kr-k}$. Thus $s_q(\lceil q^{k/\alpha}\rceil+v)\geq (q-1)k(r-1)>m$. Since $s_q(\lfloor(\lceil q^{k/\alpha}\rceil+v)^{\alpha}\rfloor)=s_q(q^k)=1$ and $m\in\mathbb{N}$ was arbitrary, we have our result.
\end{proof}

\section{Conclusions}
The ratio $\frac{s_q(p(n))}{s_q(n)}$ is clearly a very complex function for any given function $p(n)$. We have shown that its range is all of $\mathbb{Q}^+$ for $p(n)=n^2$ and for $p(n)=\lfloor n^\alpha\rfloor$ for all $\alpha\in\mathbb{Q}\cap(0,1/2]$. Other questions remain, however. We still have yet to show it for $p(n)=\lfloor n^\alpha\rfloor$ for all $\alpha>1/2$ and irrational values of $\alpha$, as well as the higher integer powers $n^3$, $n^4$,... The techniques employed for the $n^2$ case would involve too many terms to be worked out by our use of the binomial theorem for higher powers so a different technique might have to be used for these cases. As well, a different technique would have to be used for irrational values of $\alpha$ in $\lfloor n^{\alpha}\rfloor$. Due to the randomness of the expansion of irrationals, however, we do conjecture that the ratio will hit every rational number. We just don't see how to prove this. Another direction is how often a particular ratio is hit. In other words, can we say anything about $\#\{n\leq N|\frac{s_q(n^2)}{s_q(n)}=\frac{a}{c}\}$ for some positive rational $\frac{a}{c}$? This has been done for the special case $\frac{a}{c}=1$ in \cite{hare2} and \cite{melfi}. This also leads to the major open question posed by Stolarsky of obtaining the average value of this ratio.
\newline
\newline
We can ask similar questions of more complicated functions as well, such as polynomial functions, exponential functions, logarithmic functions, etc. and any combination of these. Again, we would take the floor function of the result of any of these functions to make sense of summing the digits of the result. Can we categorise which functions output ratios that hit every rational number? Can we categorise which functions output ratios that get arbitrarily close to every rational number and consequently to very real number? And for such functions can we give the exact range of the ratio if it doesn't hit every rational number? And even if it doesn't get arbitarily close to everything, can we give its range regardless? We can even include the study of bounded functions in this question, such as trigonometric functions. What if we included fractions that have terminating $q$-ary expansions, like terminating decimals in base $10$? Which functions would fall under one of these categories for a given base, but fall under a different category if a different base is used?

\section{Acknowledgements}
I would like to thank Dr. Kevin Hare in making the reserach for this paper possible. I would also like to thank NSERC and the Department of Pure Mathematics, University of Waterloo for helping support this research. This research was supported in part by NSERC and the Department of Pure Mathematics, University of Waterloo.

\end{document}